
\documentclass[a4paper]{amsart}

\usepackage[british,english]{babel} 
\usepackage{graphics,color,pgf}
\usepackage{epsfig}
\usepackage[ansinew]{inputenc}
\usepackage[all]{xy}
\usepackage{hyperref}
\usepackage{amssymb, amsmath,amsthm, mathtools, amscd}
\usepackage{mathrsfs}
\usepackage{stmaryrd}
\usepackage{cancel} 
\usepackage{tikz}
\usepackage{tikz-cd}
\usetikzlibrary{matrix}

\newtheorem{intro_thm}{Theorem}
\newtheorem{intro_cor}[intro_thm]{Corollary}
\newtheorem{intro_prop}[intro_thm]{Proposition}

\newtheorem{lem}{Lemma}[section]
\newtheorem{thm}[lem]{Theorem} 
\newtheorem{prop}[lem]{Proposition}
\newtheorem{cor}[lem]{Corollary}

\theoremstyle{definition}
\newtheorem{dfn}[lem]{Definition}

\theoremstyle{remark}

\newtheoremstyle{TheoremNum}
        {0.2 cm}{0.2 cm}              
        {\itshape}                      
        {}                              
        {}                     
        {.}                             
        { }                             
        {\thmname{\bfseries #1}\thmnote{ \bfseries #3}}
    \theoremstyle{TheoremNum}



\DeclareMathOperator{\Ker}{Ker}

\DeclareMathOperator{\C}{\mathrm{C}} 

\DeclareMathOperator{\alt}{\mathrm{alt}}
\DeclareMathOperator{\nalt}{\mathrm{n-alt}}

\renewcommand{\leq}{\leqslant}
\renewcommand{\geq}{\geqslant}
\renewcommand{\setminus}{\smallsetminus}
\setlength{\textwidth}{28cc} \setlength{\textheight}{50cc}

\title[Continuous cochains on Furstenberg boundaries]{Continuous cochains on Furstenberg boundaries and injectivity of the comparison map}

\author[Michelle Bucher]{
Michelle Bucher} 
\address{Universit\'e de Gen\`eve}
\email{Michelle.Bucher@unige.ch}

\author[Alessio Savini]{
Alessio Savini}
\address{University of Milano Bicocca} 
\email{alessio.savini@unimib.it}

\thanks{Supported by the Swiss National Science Foundation. 
\\
\indent Mathematics Subject Classification 2020: Primary 22E41, Secondary 57T10.
} 

\begin{document}

\begin{abstract}
 In \cite{Monod} Monod proved that any continuous cohomology of a semisimple Lie group $G$ can be represented by a measurable  cocycle on the associated Furstenberg boundary, which we upgraded to an alternating  cocycle in \cite{BucSavAlt}. In the current paper we improve that result by showing that we can actually take a representing cocycle which is continuous on an explicit subset of generic tuples. We give an analogous result in the case of bounded cohomology. Finally, we exploit this characterization to prove the injectivity of the comparison map in degree $3$ for $\mathrm{Isom}^\circ(\mathbb{H}_{\mathbb{C}}^n)$, when $n \geq 2$, and in degree $4$ for $\mathrm{Isom}^\circ(\mathbb{H}^n_{\mathbb{R}})$, when $n \geq 2$. 
\end{abstract}

\maketitle
\section{Introduction}

Let $G$ be a connected semisimple Lie group. It is a deep result by Austin and Moore  \cite[Theorem A]{AM} that the real valued continuous cohomology of $G$ is isomorphic to its measurable variant, namely $H^*_c(G) \cong H^*_m(G)$. The analogous result for bounded cohomology also holds, that is $H^*_{c,b}(G) \cong H^*_{m,b}(G)$ \cite[Proposition 7.5.1]{Monodbook}. In the bounded setting, one can prove that the latter bounded group cohomology is realizable on the Furstenberg boundary in the following sense: Let $P<G$ be a minimal parabolic subgroup and define
$$H^q_{m,b}(G \curvearrowright G/P):=H^q(L^\infty((G/P)^{*+1})^G),$$
where the cocomplex is endowed with the homogeneous coboundary operator. Evaluation on a base point induces a map
$$ev:H^q_{m,b}(G \curvearrowright G/P)\longrightarrow H^q_{m,b}(G)$$
which can easily be shown not to depend on the base point and further, exploiting the amenability of $P$, to be an isomorphism \cite[Corollary 7.5.9]{Monodbook}.  The analogous statement in the unbounded setting was recently shown to fail in higher rank: Defining 
$$H^q_{m}(G \curvearrowright G/P):=H^q(L^0((G/P)^{*+1})^G),$$
Monod showed in \cite{Monod} that the evaluation  map
$$ev:H^q_{m}(G \curvearrowright G/P)\longrightarrow H^*_{m}(G)$$
is surjective with a kernel that can explicitly be determined in terms of some invariant subspaces of $H^*_c(A)$, where $A<P$ is a maximal split torus. As this kernel becomes smaller when considering the corresponding cocomplexes with the additional condition that the cochains are alternating \cite{BucSavAlt}, from now on we will only consider alternating cochains. This has no effect on the measurable or continuous (bounded) cohomology of $G$, but $H^q_{m,\mathrm{alt}}(G \curvearrowright G/P)$ is in general a strict direct summand of $H^q_{m}(G \curvearrowright G/P)$.

In this paper we investigate the validity of the analogue of Austin and Moore's result for the measurable cohomology of $G$ on $G/P$. It is certainly too optimistic to expect an isomorphism with the continuous cohomology obtained by considering $G$-invariant continuous cochains on $G/P$. By continuity, such functions would automatically be bounded, but we exhibited in \cite{BucSavExp} several examples of cohomology classes in $H^*_{m}(G \curvearrowright G/P)$ which cannot be represented by bounded cocycles. Moreover, one of the simplest example of cocycles on $G/P$, namely the hyperbolic volume $\mathrm{Vol}_n$, assigning to every $(n+1)$-tuple $(x_0,\ldots,x_n)$ of points in $\partial \mathbb{H}^n_{\mathbb{R}}$ the signed volume of the associated ideal simplex, cannot be neither continuous nor cohomologous to a continuous cocycle. Indeed, given the existence of loxodromic isometries, any continuous function on $(\partial\mathbb{H}^n_\mathbb{R})^{n+1}$ has to be constant, and a constant alternating cochain is trivial. Nevertheless, the hyperbolic volume is continuous on tuples of pairwise distinct points. We will see that this phenomenon actually holds in full generality: with the right notion of \emph{genericity}, we will prove that every measurable cocycle on $G/P$ is actually cohomologous to another representative which is continuous on a full measure dense subset of tuples. 


We postpone to Section \ref{sec barycenter} the precise Definition \ref{Def generic} and we denote by $(G/P)^{(q+1)}$ the subset of generic tuples. For now we record the following facts:
\begin{itemize}
\item  The subset $(G/P)^{(q+1)}$ is open and dense in $(G/P)^{q+1}$, thus it has full measure. 
\item  When $G$ has real rank equal to one, the subset of generic tuples is precisely the subset of pairwise distinct tuples. 
\end{itemize}

We define the \emph{continuous alternating cohomology} of the $G$-action on $G/P$ as the cohomology of the cocomplex
$$
H^q_{c,\alt}(G \curvearrowright G/P):=H^q(C_{c,\alt}((G/P)^{(*+1)})^G),
$$
where 
$$
C_{c,\alt}((G/P)^{(q+1)})^G:=\{ f : (G/P)^{(q+1)} \rightarrow \mathbb{R} \ | \ \textup{$f$ is continuous and alternating} \}^G
$$
is endowed with the homogeneous coboundary operator.

\begin{intro_thm}\label{thm_continuous}
Let $G$ be a connected semisimple Lie group with finite center and $P<G$ be a minimal parabolic subgroup. The inclusion of cocomplexes 
$$
C_{c,\alt}((G/P)^{(q+1)})^G \longrightarrow L^0_{\alt}((G/P)^{q+1})^G 
$$
induces a surjection 
$$
\xymatrix{
H^q_{c,\alt}(G \curvearrowright G/P) \ar@{->>}[r]&  H^q_{m,\alt}(G \curvearrowright G/P).
}
$$
\end{intro_thm}

As a consequence any class in the continuous cohomology of $G$ can be actually realized as a $G$-invariant alternating cocycle on the boundary $G/P$ which is continuous on the subset of generic tuples. When $G$ has real rank equal to one this subset is the subset of pairwise distinct tuples. 

We do not know if the surjection of Theorem \ref{thm_continuous} is an isomorphism in general, as one could wish for when looking for an anologue of the result by Austin and Moore. The situation improves in the bounded setting. Defining the \emph{continuous bounded alternating cohomology} of the $G$-action on $G/P$ as 
$$
H^q_{c,b,\alt}(G \curvearrowright G/P):=H^q(C_{c,b,\alt}((G/P)^{(\ast+1)})^G),
$$
we obtain the desired isomorphism: 

\begin{intro_thm}\label{thm_continuous_bounded}
Let $G$ be a connected semisimple Lie group with finite center and $P<G$ be a minimal parabolic subgroup. The inclusion of cocomplexes 
$$
C_{c,b,\alt}((G/P)^{(q+1)})^G \longrightarrow L^\infty_{\alt}((G/P)^{q+1})^G 
$$
induces an isomorphism
$$H^q_{c,b,\alt}(G \curvearrowright G/P)\cong  H^q_{m,b,\alt}(G \curvearrowright G/P) .$$
\end{intro_thm}

\subsection*{Injectivity of the comparison map} The forgetful functor induces so called \emph{comparison maps} which naturally fit in the commutative diagram

\begin{equation}\label{diagram comparison intro}
\xymatrix{
 H^q_{c,b,\alt}(G \curvearrowright G/P) \ar[d]_{\mathrm{comp}^q_{G \curvearrowright G/P}} \ar@{->}[rr]^{ ev}_{\cong} &&H^q_{c,b}(G) \ar[d]^{\mathrm{comp}^q_G}\\
H^q_{c,\alt}(G \curvearrowright G/P)\ar@{->}[rr]^{ ev} && H^q_c(G).
}
\end{equation}

It is a mysterious open question whether the function $\mathrm{comp}_G^q$ is actually an isomorphism \cite[Problem A]{MonodICM}. In the current manuscript we focus our attention on the injectivity issue. The injectivity of $\mathrm{comp}^q_G$ is trivial in degrees $0$ and $1$. Injectivity is known to hold:
\begin{itemize}
\item In degree $2$ in full generality \cite{BM99}.
\item In degree $3$ for $\mathrm{SL}(2,\mathbb{R})$ \cite{BuMo2}, for $\mathrm{SL}(2,\mathbb{C})$ \cite{Bloch}, for $\mathrm{SL}(n,\mathbb{R})$ \cite{MonodSLn}, for $\mathrm{SL}(n,\mathbb{C})$ \cite{MonodSLn,BBI18}, for $\mathrm{Isom}^\circ(\mathbb{H}^n_{\mathbb{R}})$ \cite{Pieters}, for $\mathrm{SO}(n,\mathbb{C})$ and $\mathrm{Sp}(2n,\mathbb{C})$ \cite{DLC}, for products of groups of orientation preserving isometries of hyperbolic spaces \cite{BucSavExp}. 
\item In degree $4$ only for $\mathrm{SL}(2,\mathbb{R})$ \cite{HO}.
\end{itemize} 

In this paper we further establish injectivity in degree $3$ for $\mathrm{Isom}^\circ(\mathbb{H}^n_\mathbb{C})$ (Corollary \ref{cor complex}) and in degree $4$ for  $\mathrm{Isom}^\circ(\mathbb{H}^n_\mathbb{R})$ (Theorem \ref{thm real}).

From Diagram (\ref{diagram comparison intro}) it is clear that $\mathrm{comp}^q_G$ is injective if and only if $\mathrm{comp}^q_{G \curvearrowright G/P}$ is injective and the kernel of $ev$ intersects the image of $\mathrm{comp}^q_{G \curvearrowright G/P}$ trivially. We will see that exploiting the proofs of Theorems \ref{thm_continuous} and \ref{thm_continuous_bounded} we can do a little better by restricting the latter condition on the  isomorphic copy of 

$$H^{q-1}_c(A)^{w_0}=\mathrm{Ker}(ev: H^q_{m,\mathrm{alt}}(G \curvearrowright G/P)\longrightarrow H^q_{m}(G)),$$
which naturally injects in the kernel of the evaluation map  
$$ev: H^q_{c,\mathrm{alt}}(G \curvearrowright G/P)\longrightarrow H^q_{c}(G).$$
More precisely, we establish the following criterion: 

\begin{intro_prop}\label{prop injectivity comparison}\label{criterion}
Let $G$ be a connected semisimple Lie group with finite center. Let $P$ be a miminal parabolic subgroup, $A<P$ a maximal split torus and $w_0$ a representative of the longest element in the Weyl group. The comparison map 
$$
\mathrm{comp}^q_G:H^q_{c,b}(G) \rightarrow H^q_c(G)
$$
is injective if and only if 
\begin{enumerate}
\item the kernel of the measurable evaluation map consists of unbounded classes in $H^q_{c,\alt}(G \curvearrowright G/P)$ , i.e. 
$$H^{q-1}_c(A)^{w_0}\cap \mathrm{comp}^q_{G \curvearrowright G/P}(H^q_{c,b,\alt}(G \curvearrowright G/P))=\{0\},$$ 

\item the comparison map 
$$
\mathrm{comp}^q_{G \curvearrowright G/P}: H^q_{c,b,\alt}(G \curvearrowright G/P) \longrightarrow H^q_{c,\alt}(G \curvearrowright G/P)
$$
is injective.
\end{enumerate}

\end{intro_prop}

The first condition of this criterion is trivially satisfied for $q$ strictly greater than the real rank of $G$. It also always holds for even $q$ when $w_0$ acts as $-1$ on the Lie algebra of $A$, since in that case $H^{q-1}_c(A)^{w_0}=0$. It is our belief that the validity of the first condition is the easier part of this criterion. The difficulty of proving injectivity of the comparison map has thus shifted to the boundary. Exploiting continuity and transitivity properties of the action of $\mathrm{Isom}^\circ(\mathbb{H}^n_{\mathbb{R}})$ on its Furstenberg boundary we prove: 

\begin{intro_thm}\label{thm boundary deg 4}
Let $G=\mathrm{Isom}^\circ(\mathbb{H}^n_{\mathbb{R}})$, where $n \geq 2$. If $P<G$ is any parabolic subgroup, the map
$$
\xymatrix{
\mathrm{comp}^4_{G \curvearrowright G/P}:H^4_{c,b}(G \curvearrowright G/P) \ar@{^{(}->}[r] & H^4_c(G \curvearrowright G/P)
}
$$
is injective. 
\end{intro_thm}

As a consequence of Proposition \ref{prop injectivity comparison} and Theorem \ref{thm boundary deg 4} we obtain 

\begin{intro_thm}\label{thm group deg 4}\label{thm real}
Let $G=\mathrm{Isom}^\circ(\mathbb{H}^n_{\mathbb{R}})$, where $n \geq 2$. The comparison map 
$$
\xymatrix{
\mathrm{comp}^4_{G}:H^4_{c,b}(G) \ar[r]^{\hspace{10pt} \cong} & H^4_c(G)
}
$$
is an isomorphism. 
\end{intro_thm}

This was known previously only in the case of $n=2$ by a tour de force by Hartnick and Ott  \cite{HO} involving elaborate partial differential equations in a way we cannot claim to fully comprehend. Our proof appears to us as more elementary. Note that the surjectivity is well known.

\subsection*{Configurations of triples of points and injectivity in degree $3$} We conclude with a curious interplay between an algebraic property of the action of the longest element $w_0$ and the topology (more precisely only the non-compactness) of the configuration space of triples of generic points:

\begin{intro_prop}\label{intro prop injective degree 3}
Let $G$ be a connected semisimple Lie group with finite center. Let $P<G$ be a minimal parabolic subgroup, $A<P$ a maximal split torus and $w_0$ a representative of the longest element in the Weyl group. \begin{enumerate}
\item If the quotient $G\backslash (G/P)^{(3)}$ is compact, then the comparison map
$$
\mathrm{comp}^2_{G \curvearrowright G/P}:H^2_{c,b}(G \curvearrowright G/P) \longrightarrow H^2_c(G \curvearrowright G/P)
$$
is surjective and 
$$
\mathrm{comp}^3_{G \curvearrowright G/P}:H^3_{c,b}(G \curvearrowright G/P) \longrightarrow H^3_c(G \curvearrowright G/P)
$$
is injective. 
\item If $w_0$ does not act as $-1$ on $\mathfrak{a}=\mathrm{Lie}(A)$, then $G\backslash (G/P)^{(3)}$ is not compact.
\end{enumerate}
\end{intro_prop}

\begin{proof} \begin{enumerate} \item A $G$-invariant continuous cochain $f:(G/P)^{(3)}\rightarrow \mathbb{R}$ corresponds to a continuous map $F:G\backslash (G/P)^{(3)}\rightarrow \mathbb{R}$. If $G\backslash (G/P)^{(3)}$ is compact, such a map is automatically bounded, and thus in this case any $G$-invariant continuous cochain $f:(G/P)^{(3)}\rightarrow \mathbb{R}$ is bounded. This immediately proves surjectivity in degree $2$. For the injectivity in degree $3$, suppose that a bounded $G$-invariant continuous cocycle $b: (G/P)^{(4)}\rightarrow \mathbb{R}$ vanishes in $H^3_c(G \curvearrowright G/P)
$. This means that there exists a $G$-invariant continuous cochain $f:(G/P)^{(3)}\rightarrow \mathbb{R}$ such that $\delta f=b$. But if $G\backslash (G/P)^{(3)}$ is compact, the cochain $f$ is bounded. 
\item We will use injectivity of the comparison map in degree $2$  \cite{BM99}, which by Proposition \ref{criterion} in particular implies that the kernel of the evaluation map intersects the image of the comparison map 
$$
\mathrm{comp}^2_{G \curvearrowright G/P}:H^2_{c,b}(G \curvearrowright G/P) \longrightarrow H^2_c(G \curvearrowright G/P)
$$
trivially. Now $w_0$ does not act as $-1$ if and only if $H^1_c(A)^{w_0}\neq 0$. In particular the kernel of the evaluation map is nontrivial and as a consequence the latter comparison map is not surjective. By the first item of the proposition this implies that the quotient $G\backslash (G/P)^{(3)}$ is not compact. 
\end{enumerate}
\end{proof}

For example, for $G=\mathrm{Isom}^\circ(\mathbb{H}^2_\mathbb{R})$, there are two orbits of generic triple of points in the Furstenberg boundary $\partial \mathbb{H}^2_\mathbb{R}$ so that the configuration space of distinct triples of points consists of $2$ points and is hence compact. For $G=\mathrm{Isom}^\circ(\mathbb{H}^n_\mathbb{R})$, with $n\geq 3$, there is only one orbit. For $G=\mathrm{Isom}^\circ(\mathbb{H}^n_\mathbb{C})$, with $n\geq 2$, the Cartan angular invariant gives a homeomorphism between the configuration space of distinct triples of points in $\partial \mathbb{H}^n_\mathbb{C}$ and the closed interval $[-\pi/2,\pi/2]$. In contrast, for $G=\mathrm{SL}(3,\mathbb{R})$, the Furstenberg boundary is given by the space of maximal flags in $\mathbb{R}^3$ and the triple ratio of generic triples of maximal flags induces a homeomorphism between the configuration space of triples and $\mathbb{R}\setminus \{0,1\}$ which  is indeed not compact. 

Note that the converse of the second item of Proposition \ref{intro prop injective degree 3} does not hold. For example for $G=\mathrm{Sp}(4,\mathbb{R})$ the longest element $w_0$ acts on $\mathfrak{a}$ as $-1$ but the quotient $G\backslash (G/P)^{(3)}$ is not compact. 

We conclude the introduction with a direct application of Proposition \ref{intro prop injective degree 3}: 

\begin{intro_cor}\label{cor complex}
Let $G=\mathrm{Isom}^\circ(\mathbb{H}^n_{\mathbb{C}})$, where $n \geq 2$. The comparison map
$$
\xymatrix{
\mathrm{comp}^3_{G}:H^3_{c,b}(G) \ar@{^{(}->}[r] & H^3_c(G).
}
$$
is injective. 
\end{intro_cor}

\begin{proof} We only need to verify the two items of Proposition \ref{criterion}: The first one holds since $A\cong \mathbb{R}$ so that $H^2_c(A)^{w_0}=0$. The second holds since, as noticed above, the space of configuration of triples of points is homeomorphic to a closed interval, so that the comparison map 
$$
\xymatrix{
\mathrm{comp}^3_{G \curvearrowright G/P}:H^3_{c,b}(G \curvearrowright G/P) \ar@{^{(}->}[r] & H^3_c(G \curvearrowright G/P).
}
$$
is injective by the first item of Proposition \ref{intro prop injective degree 3}. 
\end{proof}

\section{Generic tuples on the Furstenberg boundary and barycenter map}\label{sec barycenter}

Let $G$ be a connected semisimple Lie group with finite center. We consider a minimal parabolic subgroup $P<G$ and a maximal split torus $A<P$ with Lie algebra $\mathrm{Lie}(A)=\mathfrak{a}$. If $K$ denotes a maximal compact subgroup in $G$, recall that the \emph{Weyl group} $W$ is defined as the quotient 
$$
W:=N_K(\mathfrak{a})/Z_K(\mathfrak{a}),
$$
where $N_K$ and $Z_K$ are respectively the normalizer and the centralizer in $K$ of $\mathfrak{a}$ with respect to the adjoint representation. We fix a representative $w_0$ of the longest element in $W$. More generally, every time that we pick an element $w \in W$, we will tacitly assume to fix a representative of it. 

Two points $x,y \in G/P$ in the Furstenberg boundary are said to be \emph{opposite} if they lie in the same $G$-orbit as $(P,w_0P)$. Given two opposite points $x,y \in G/P$, we denote by $F_{x,y}$ the unique maximal flat determined by $x$ and $y$. In the particular case of $(P,w_0P)$, we denote it by $F_A$ and we call it the canonical maximal flat. The \emph{boundary} $\partial F \subset G/P$ of a maximal flat is the set of equivalence classes of Weyl chambers determined by $F$. For $F_A$ we have
$$
\partial F_A = \{ wP \ | \ w \in W\}.
$$
Recall that the $G$-action on the set of maximal flats is transitive and it holds that $\partial(gF_A)=g(\partial F_A)$. 

\begin{dfn}\cite{BucSavProj} \label{Def generic} Let $q\geq 1$. We define the set $(G/P)^{(q)}$ of \emph{generic} $q$-tuples in $(G/P)^q$ as follows: 
 \begin{itemize}
\item If $q=1$, take $(G/P)^{(1)}:=G/P$. 
\item If $q=2$, it is the set of opposite tuples. 
\item For $q\geq 3$, if $w_0$ acts as $-1$ on $\mathfrak{a}$, we define the set of generic points $(G/P)^{(q)}$ to be the set of pairwise opposite points. 
\item For $q\geq 3$, if $w_0$ does not act as $-1$ on $\mathfrak{a}$, we define the set of generic points $(G/P)^{(q)}$ to consists of $q$-tuples $(x_1,\dots,x_q)\in (G/P)^q$ such that $x_k$ is opposite to every point in the boundary of $\partial F_{x_i,x_j}$, for every distinct  $1 \leq i,j,k \leq q$.  
\end{itemize}
\end{dfn}

Observe that when the rank of $G$ is one, generic $q$-tuples are precisely $q$-tuples of pairwise distinct points. 

\begin{lem}\label{lem symmetrically weyl opposite}
Given a connected semisimple Lie group $G$ with finite center and a minimal parabolic subgroup $P<G$, the subset $(G/P)^{(q)}$ of generic $q$-tuples is open and dense and hence it has full measure. 
\end{lem}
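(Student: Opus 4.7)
The plan is to exhaust the definition case by case, reducing everything to finite intersections of open dense conditions and applying the Baire category theorem in the compact smooth manifold $(G/P)^q$. The case $q=1$ is tautological. For $q=2$, the set of opposite pairs coincides with the $G$-orbit of $(P, w_0 P)$, which is the unique top-dimensional Bruhat cell in $(G/P)^2$; its complement is a finite union of strictly lower-dimensional orbits, hence nowhere dense.

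For $q \geq 3$ with $w_0$ acting as $-1$ on $\mathfrak{a}$, I would observe that $(G/P)^{(q)} = \bigcap_{i \neq j} \pi_{ij}^{-1}((G/P)^{(2)})$, where $\pi_{ij}: (G/P)^q \to (G/P)^2$ is the projection onto the $(i,j)$-coordinates; each factor is open and dense, so the finite intersection is open and dense by Baire.

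The substantive case is $q \geq 3$ with $w_0$ not acting as $-1$. For each triple $(i,j,k)$ of pairwise distinct indices I would set
\begin{equation*}
U_{ijk} := \{(x_1, \dots, x_q) : (x_i, x_j) \in (G/P)^{(2)} \text{ and } x_k \text{ is opposite to every } y \in \partial F_{x_i, x_j}\},
\end{equation*}
so that $(G/P)^{(q)} = \bigcap_{ijk} U_{ijk}$. Since $U_{ijk}$ is the preimage of an analogous subset $U'_{ijk} \subset (G/P)^3$ under the (open, continuous) projection onto the three coordinates $(i,j,k)$, it suffices to prove openness and density of $U'_{ijk}$. For openness I would exploit the principal $MA$-bundle structure of $G \to (G/P)^{(2)}$, $g \mapsto (gP, g w_0 P)$, which admits continuous local sections: near any opposite pair $(x_0, y_0)$, a continuous local choice $g(x, y)$ yields $\partial F_{x, y} = \{g(x, y) w P : w \in W\}$, with each of the $|W|$ points depending continuously on $(x, y)$. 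Since opposition is an open condition on $(G/P)^2$, $U'_{ijk}$ is locally the intersection of $|W|$ open conditions and is therefore open.

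Density of $U'_{ijk}$ I would establish by a two-step perturbation. Given $(x_0, y_0, z_0)\in (G/P)^3$ and a prescribed neighborhood, first use the $q=2$ case to perturb $(x_0, y_0)$ to a nearby opposite pair $(x, y) \in (G/P)^{(2)}$; then $\partial F_{x, y}$ is a fixed set of $|W|$ points, and the set of $z \in G/P$ opposite to each of them is a finite intersection of open dense subsets of $G/P$ (by the Bruhat decomposition applied to each fixed point), hence open and dense by Baire in $G/P$; perturb $z_0$ into it. A final application of the Baire category theorem in $(G/P)^q$ then yields density of $\bigcap_{ijk} U_{ijk}$. The main obstacle I anticipate is formalising the continuous dependence of the finite set $\partial F_{x,y}$ on the opposite pair, which the principal bundle structure resolves; once this is in place, the rest is a standard Baire-category argument.
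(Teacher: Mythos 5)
Your proof is correct, and it takes a noticeably different route from the paper's. Both proofs reduce to the case of three-point conditions via a finite intersection, but they diverge there. The paper introduces $(G/P)_{\mathrm{opp}} := \bigcap_{w\in W}\mathrm{Opp}_{wP}$ and argues in one stroke that the map $\pi: G\times (G/P)_{\mathrm{opp}} \to (G/P)^{(3)}$, $\pi(g,x)=(gP,gw_0P,gx)$, is an open surjection, reading off openness and density of $(G/P)^{(3)}$ from the properties of $\pi$; the openness of $\pi$ and the identification of its image with the symmetrically-defined $(G/P)^{(3)}$ are left to the reader (the latter tacitly uses a symmetry property of the generic condition coming from \cite{BucSavProj}). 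You instead split $(G/P)^{(q)}$ into the explicit intersection $\bigcap_{ijk}U_{ijk}$, pull each factor back from $(G/P)^3$, and prove openness of $U'_{ijk}$ by extracting continuous local sections of the principal $MA$-bundle $G\to (G/P)^{(2)}$ to show that $\partial F_{x,y}$ varies continuously in $(x,y)$; density you get from a two-step perturbation using only that opposite pairs and, for each fixed boundary point, big Bruhat cells are open and dense. Your local-section argument is essentially the local version of what makes $\pi$ open, so the underlying geometry is the same, but your treatment is more explicit, does not require the symmetry of the generic condition as an input, and spells out the density step which the paper compresses. One small point: you invoke the Baire category theorem in two places for finite intersections of open dense sets; this is overkill, as a finite intersection of open dense subsets is automatically open and dense with no completeness hypothesis needed.
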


\begin{proof}
Suppose that $w_0$ does not acts as $-1$ on $\mathfrak{a}=\mathrm{Lie}(A)$. We start showing that $(G/P)^{(3)}$ is an open dense subset of full measure. Given a point $x \in G/P$ we denote by 
$$
\mathrm{Opp}_x:=\{ y \in G/P \ | \ y \ \text{is opposite to} \ x \},
$$
and we define
$$
(G/P)_{\mathrm{opp}}:=\bigcap_{w \in W} \mathrm{Opp}_{wP}. 
$$
Notice that $(G/P)_{\mathrm{opp}}$ is open and dense in $G/P$, since each $\mathrm{Opp}_w$ is open and dense in $G/P$.  

The map
$$
\pi:G \times (G/P)_{\mathrm{opp}} \longrightarrow (G/P)^{(3)}, \ \ \pi(g,x):=(gP,gw_0P,gx),
$$
is an open surjection which parametrizes the subset of generic triples. Thus $(G/P)^{(3)}$ is open and dense in $(G/P)^3$. Finally $(G/P)^{(q)}$ is open and dense, being a finite intersection of open and dense subsets of $(G/P)^q$.

In the case where $w_0$ acts as $-1$ on $\mathfrak{a}$, the claim fowllows from the fact that opposite pairs $(G/P)^{(2)}$ form an open and dense subset in $(G/P)^2$. 
\end{proof}

The relevance of the subset $(G/P)^{(q)}$ of generic $q$-tuples relies on the existence of a \emph{barycenter map} onto the symmetric space $G/K$. More precisely, the authors \cite[Corollary 4]{BucSavProj} proved the existence of a $G$-equivariant symmetric continuous map
$$
\mathrm{bar}_q:(G/P)^{(q)} \longrightarrow G/K
$$
which we call \emph{barycenter}. We will exploit the map $\mathrm{bar}_3$ to obtain a vanishing result in the computation of the spectral sequence for continuous cochains. 

\section{Proof of Theorem \ref{thm_continuous}}\label{sec continuous theorem}

To prove Theorem \ref{thm_continuous} we adapt the spectral sequences from \cite{Monod} and \cite{BucSavAlt} to the continuous setting. Most of the adaptations are straightforward except for two crucial issues: 1) It is not clear that one of the spectral sequence associated to the double complex we consider vanishes, which is the reason that we cannot exploit that the other spectral sequence degenerates and conclude that there is an isomorphism with the  measurable cohomology. 2) The proof of the triviality of the columns $q\geq 3$ in the first page of the first spectral sequence by Monod \cite[Proposition 5.1]{Monod} does not work in the continuous setting. Our proof (Theorem \ref{thm vanishing gp unbounded}) will depend on the existence of  a $G$-equivariant symmetric continuous \emph{barycenter map} \cite{BucSavProj} 
\begin{equation}\label{eq intro bar 3}
\mathrm{bar}_3:(G/P)^{(3)} \rightarrow G/K.
\end{equation}
Here $G/K$ is the Riemannian globally symmetric space associated to $G$. When $G=\mathrm{Isom}^\circ(\mathbb{H}^n_\mathbb{R})$, the barycenter is precisely the barycenter of the ideal triangle defined by $x,y,z \in \partial \mathbb{H}^n_{\mathbb{R}}$. 

Recall that $G$ is a connected semisimple Lie group with finite center. Fix a
minimal parabolic subgroup $P<G$ and a maximal split torus $A<P$. Let $K$ be
a maximal compact subgroup and $M<A$ be the centralizer of $A$. Let $w_0$ be a
representative of the longest element in the Weyl group.

We want to replace the bicomplex
\begin{equation} \label{measurable alternating bicomplex}
M^{p,q}:=L^0(G^{p+1},L^0_{\alt}((G/P)^q))^G
\end{equation}
we introduced \cite{BucSavAlt} for measurable (hence the letter $M$) alterning cochains (which
was itself based on Monod's bicomplex \cite{Monod}), with an analogous bicomplex, but this time using continuous functions. More precisely, we define
\begin{equation}\label{bicomplex Kpq}
C^{p,q}:=C_c(G^{p+1},C_{c,\alt}((G/P)^{(q)}))^G,
\end{equation}
the space of $G$-equivariant continuous functions on $G^{p+1}$ with values in the space of continuous alternating functions on $(G/P)^{(q)}$. Alternation is well-defined on $(G/P)^{(q)}$ since the latter is invariant with respect to the action of the symmetric group $\mathrm{Sym}(q)$. The vertical differential 
$$
d^\uparrow:C^{p,q} \rightarrow C^{p+1,q}
$$
is the usual homogeneous differential on the $G$-variable, whereas the horizontal one 
$$
d^\rightarrow:C^{p,q} \rightarrow C^{p,q+1}
$$
is the homogeneous differential on $(G/P)^{(q)}$ weighted with the sign $(-1)^{p+1}$, so that $d^\rightarrow d^\uparrow=d^\uparrow d^\rightarrow$. 

Since the subset of generic $q$-tuples has full measure in $(G/P)^q$, we have a well-defined inclusion of continuous functions into (classes of) measurable ones 
$$
C^{p,q} \longrightarrow M^{p,q},
$$
which respects both the vertical differential and the horizontal one, leading to a map of bicomplexes. We will want to exploit this map and all the information about the bicomplex of measurables functions. We start by studying the first page of the spectral sequence
$$
E_1^{p,q}=(H^q(C^{\ast,p},d^\uparrow),d_1=d^\rightarrow). 
$$
The $(p,q)$-entry of this first page is given by
$$
E_1^{p,q}=H^q_c(G,C_{c,\alt}((G/P)^{(p)}). 
$$

Recall that the alternation map 
$$
\mathrm{Alt}_p:C_c((G/P)^{(p)}) \rightarrow C_c((G/P)^{(p)}),
$$
$$
\mathrm{Alt}_p(f)(g_1P,\ldots,g_pP)=\sum_{\sigma \in \mathrm{Sym}(p)} \mathrm{sgn}(\sigma)f(g_{\sigma(1)}P,\ldots,g_{\sigma(p)}P)
$$
induces a splitting
$$
C_c((G/P)^{(p)}) = C_{c,\alt}((G/P)^{(p)}) \oplus C_{c,\nalt}((G/P)^{(p)})
$$
into alternating and non-alternating functions, which further induces a splitting
\begin{equation}\label{eq coefficients splitting}
H^q_c(G,C_c((G/P)^{(p)})) \cong H_c^q(G,C_{c,\alt}((G/P)^{(p)})) \oplus H_c^q(G,C_{c,\nalt}((G/P)^{(p)}))
\end{equation}
on the corresponding cohomology groups. 
 
The first column ($p=0$) of the first page of our spectral sequence is clearly $E_1^{0,q}=H^q_c(G)$. Let us investigates the columns $p=1$ and $p=2$. Before doing that, we remind the reader of the existence of a natural action of $w_0$ on the cohomology $H^q_c(A)$ induced by the adjoint representation. In particular, we can consider the map 
$$
\Pi^q_{w_0}:H^q_c(A) \rightarrow H^q_c(A), \  \ \ \alpha \mapsto \alpha-\mathrm{Ad}(w_0)(\alpha). 
$$
We say that an element $\alpha \in H^q_c(A)$ is $w_0$-\emph{invariant} if it lies in the kernel of $\Pi^q_{w_0}$, whereas we say that $\alpha$ is $w_0$-\emph{equivariant} if it lies in the image of $\Pi^q_{w_0}$. We denote by 
$$
H^q_c(A)^{w_0}:=\Ker(\Pi^q_{w_0})
$$
the subspace of $w_0$-invariant classes. 

\begin{lem}\label{lem first two columns}
For every $q \geq 0$ we have that
$$
H^q_c(G,C_c(G/P)) \cong H^q_c(A),
$$
$$
H^q_c(G,C_{c,\alt}((G/P)^{(2)})) \cong \mathrm{Im}(\Pi^q_{w_0}).
$$
\end{lem}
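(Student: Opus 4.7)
The plan is to invoke a continuous version of Shapiro's lemma---the direct analogue of the measurable argument Monod uses in \cite{Monod}---to reduce both cohomologies to continuous cohomology of closed subgroups, and then to identify those with $H^q_c(A)$ by standard van Est and Künneth arguments.

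First, since $G/P$ is compact, $C_c(G/P)$ is simply the continuously induced module $\mathrm{Ind}_P^G \mathbb{R}$. For the second coefficient module, $(G/P)^{(2)}$ is a single $G$-orbit whose stabilizer of $(P, w_0P)$ is $P \cap w_0 P w_0^{-1} = MA$ (the unipotent radicals $N$ and $w_0 N w_0^{-1}$ intersect trivially). Hence $gMA \mapsto (gP, gw_0P)$ is a $G$-equivariant homeomorphism $G/MA \cong (G/P)^{(2)}$, and $C_c((G/P)^{(2)}) \cong C(G/MA) = \mathrm{Ind}_{MA}^G \mathbb{R}$. Continuous Shapiro then yields
$$H^q_c(G, C_c(G/P)) \cong H^q_c(P), \qquad H^q_c(G, C_c((G/P)^{(2)})) \cong H^q_c(MA).$$
Since $M = Z_K(A)$ is compact and commutes with $A$, one has $MA = M \times A$, and Künneth for continuous cohomology gives $H^q_c(MA) \cong H^q_c(A)$. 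The analogous reduction $H^q_c(P) \cong H^q_c(A)$ is the standard van Est calculation used in \cite{Monod} and \cite{BucSavAlt}: taking $M$-invariants in the relative Chevalley--Eilenberg complex on $\mathfrak{p}/\mathfrak{m} = \mathfrak{a} \oplus \mathfrak{n}$ collapses the contribution of $\mathfrak{n}$. This establishes the first isomorphism.

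For the second isomorphism, I must track the alternating part. The swap $\sigma \in \mathrm{Sym}(2)$ commutes with the $G$-action on $C_c((G/P)^{(2)})$; under the identification $G/MA \cong (G/P)^{(2)}$ it corresponds to right translation by $w_0$, since
$$\sigma(gP, gw_0 P) = (gw_0 P, gP) = (gw_0 P, (gw_0)w_0^{-1}P) \longleftrightarrow g w_0\, MA,$$
where we use that $w_0^2 \in Z_K(A) = M \subset P$ to see that the target is well-defined. Since $w_0 \in N_G(MA)$, this right translation is a $G$-equivariant automorphism of $G/MA$, and by naturality of Shapiro's isomorphism with respect to normalizer automorphisms, the induced endomorphism of $H^q_c(MA) \cong H^q_c(A)$ is precisely the $\mathrm{Ad}(w_0)$-action used to define $\Pi^q_{w_0}$. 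Consequently, the decomposition $C_c((G/P)^{(2)}) = C_{c,\mathrm{sym}} \oplus C_{c,\alt}$ into $\pm 1$-eigenspaces for $\sigma$ corresponds on cohomology to the decomposition of $H^q_c(A)$ into $\pm 1$-eigenspaces for $\mathrm{Ad}(w_0)$. The $(-1)$-eigenspace is the image of the projector $\tfrac{1}{2}(1 - \mathrm{Ad}(w_0)) = \tfrac{1}{2} \Pi^q_{w_0}$, which equals $\mathrm{Im}(\Pi^q_{w_0})$, proving the second isomorphism.

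The main obstacle I foresee is the naturality statement invoked above: one must verify that, after transport through Shapiro's isomorphism, the swap acts on $H^q_c(A)$ as $\mathrm{Ad}(w_0)$ and not as, say, its inverse or a sign-twisted variant. This can be checked at the cochain level using the van Est resolution and is the continuous counterpart of the verification already carried out in the measurable setting in \cite{BucSavAlt}. The remaining ingredients---continuous Shapiro for closed subgroups of Lie groups, van Est, and the Künneth reduction for $MA = M \times A$---are standard.
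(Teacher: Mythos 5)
Your proposal is correct and follows essentially the same route as the paper: identify $C_c(G/P)$ and $C_c((G/P)^{(2)})$ as continuously induced modules from $P$ and from $MA \cong G$-stabilizer of $(P, w_0P)$, apply a continuous Shapiro argument, and then reduce $H^q_c(P)$ and $H^q_c(MA)$ to $H^q_c(A)$, tracking the $\mathrm{Sym}(2)$-action through the induction isomorphism as $\mathrm{Ad}(w_0)$. The only differences are cosmetic — the paper cites Monod's Proposition 3.1 for $H^q_c(P) \cong H^q_c(A)$ and Blanc's Theorem 9.1 for $H^q_c(MA) \cong H^q_c(A)$ where you sketch a van Est and a Künneth argument, and it defers the swap/$\mathrm{Ad}(w_0)$ identification to \cite{BucSavAlt} where you spell out the cochain-level correspondence.
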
 

\begin{proof}
There exists an isomorphism of cocomplexes
\begin{equation}\label{iso shapiro P}
\Psi:C_c(G^{q+1})^P \longrightarrow C_c(G^{q+1},C_c(G/P))^G, \ \  (\Psi f)(g_0,\ldots,g_q)(gP):=f(g^{-1}g_0,\ldots,g^{-1}g_q).
\end{equation}
which induces the following isomorphism
$$
H^q_c(G,C_c(G/P)) \cong H_c^q(P),
$$
for every $q \geq 0$. Now it is sufficient to recall that the inclusion $A \rightarrow P$ induces an isomorphism in cohomology \cite[Proposition 3.1]{Monod}, namely
$$
H^q_c(P) \cong H^q_c(A),
$$
for every $q \geq 0$. 

Recall that the set $(G/P)^{(2)}$ of pairs of opposite points is an open dense subset of $(G/P)^2$ which can be identified with the quotient $G/MA$ via the map
$$
G/MA \rightarrow (G/P)^{(2)},  \ \ \ gMA \mapsto (gP,gw_0P).
$$
The existence of this $G$-equivariant diffeomorphism implies the following isomorphism
$$
H^q_c(G,C_c((G/P)^{(2)})) \cong H^q_c(G,C_c(G/MA)),
$$
for every $q \geq 0$. By substituting $P$ with $MA$ in Equation \eqref{iso shapiro P}, we obtain the isomorphism
$$
H^q_c(G,C_c(G/MA)) \cong H_c^q(MA),
$$
for every $q \geq 0$. Since $M$ is compact and centralizes $A$, we can apply \cite[Theorem 9.1]{Blanc} to get 
$$
H^q_c(MA) \cong H^q_c(A),
$$
for every $q \geq 0$. Moreover, the same of proof as \cite[Proposition 8]{BucSavAlt} shows that the splitting given by Equation \eqref{eq coefficients splitting} corresponds to the splitting of $H^q_c(A)$ into $w_0$-invariant and $w_0$-equivariant classes. More precisely, we have that 
$$
H^q_c(G,C_{c,\nalt}((G/P)^{(2)})) \cong H^q_c(A)^{w_0}, \ \ \ H^q_c(G,C_{\alt}((G/P)^{(2)})) \cong \mathrm{Im}(\Pi^q_{w_0}), 
$$
for every $q \geq 0$. This concludes the proof. 
\end{proof}

The next crucial step is to show triviality of the columns $E^{p,\ast}_1$ for $p\geq 3$. This is the first instance where a completely new ingredient is needed. The strategy will be to show  that the cohomology of the cocomplex $C_c(G^{*+1},C_c((G/P)^{(p)})^G$ is isomorphic to the cohomology of 
\begin{equation}\label{def Cpq}
C_K^{*,p}:=C_c((G/K)^{*+1},C_c((G/P)^{(p)}))^G.
\end{equation}
Then we will see that the latter cohomology group is trivial in the proof of Theorem \ref{thm vanishing gp unbounded}.


\begin{lem}\label{K invariant representative}
The evaluation on the base point $K$, 
$$C_c((G/K)^{*+1},C_c((G/P)^{(p)}))^G\longrightarrow  C_c(G^{*+1},C_c((G/P)^{(p)}))^G$$
induces an isomorphism in cohomology
$$ H^q(C_c((G/K)^{*+1},C_c((G/P)^{(p)}))^G)\cong H^q( C_c(G^{*+1},C_c((G/P)^{(p)}))^G).$$
\end{lem}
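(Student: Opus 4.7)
The plan is to compare the two cocomplexes via a bicomplex. With $V := C_c((G/P)^{(p)})$ fixed, set
$$
D^{r,s} := C_c(G^{r+1} \times (G/K)^{s+1}, V)^G,
$$
equipped with the homogeneous horizontal coboundary $d^\rightarrow$ on $G^{*+1}$ and vertical coboundary $d^\uparrow$ on $(G/K)^{*+1}$, signed so they anticommute. The natural augmentations
$$
\epsilon_h : C_c(G^{r+1}, V)^G \longrightarrow D^{r,0}, \qquad \epsilon_v : C_c((G/K)^{s+1}, V)^G \longrightarrow D^{0,s},
$$
given by pullback along the projections, realize the two cocomplexes of the statement as the two edges of the bicomplex. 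It then suffices to show that both augmented rows and augmented columns are exact; once that is done, both edge inclusions will be quasi-isomorphisms into the total complex $\mathrm{Tot}(D^{*,*})$, and the resulting isomorphism on cohomology is the one induced by evaluation on the base point $K$.

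Column augmentation-exactness (fixed $r$) is handled by the $G$-equivariant contracting homotopy
$$
h_\downarrow(F)(g_0, \ldots, g_r; x_0, \ldots, x_{s-1}) := F(g_0, \ldots, g_r; g_0 K, x_0, \ldots, x_{s-1}).
$$
The shared variable $g_0$ makes this $G$-equivariant, and the standard simplicial cancellation yields $d^\uparrow h_\downarrow + h_\downarrow d^\uparrow = \mathrm{id}$, including the base case $s = 0$ where the augmentation $\epsilon_h$ appears.

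Row augmentation-exactness (fixed $s$) is more delicate, because no $G$-equivariant global section of $G \to G/K$ exists and so the naive analogue of $h_\downarrow$ with the roles of the variables reversed is not equivariant. The fix is to choose a merely continuous (non-equivariant) global section $\sigma : G/K \to G$, e.g.\ $\sigma(anK) := an$ coming from the Iwasawa decomposition $G = KAN$, and then average over the compact stabilizer:
$$
h_\leftarrow(\phi)(h_0, \ldots, h_{r-1}; x_0, \ldots, x_s) := \int_K \phi(\sigma(x_0) k, h_0, \ldots, h_{r-1}; x_0, \ldots, x_s) \, dk.
$$
Left-invariance of Haar measure on $K$ implies $h_\leftarrow(\phi)$ is independent of the choice of $\sigma$ within each $K$-fiber above $x_0$, which in particular restores the missing $G$-equivariance. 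Continuity is inherited from $\sigma$ and $\phi$ together with compact integration, and a simplicial cancellation parallel to the column case yields $d^\rightarrow h_\leftarrow + h_\leftarrow d^\rightarrow = \mathrm{id}$; the case $r = 0$ matches $\epsilon_v$ because $\int_K dk = 1$.

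The main obstacle is exactly this simultaneous requirement of continuity and $G$-equivariance for $h_\leftarrow$ in the absence of a $G$-equivariant section of $G \to G/K$; averaging over the compact $K$ is the crucial device. Once both augmented rows and columns are exact, standard bicomplex/spectral sequence arguments (both spectral sequences collapse on the edge at $E_1$) deliver the isomorphism in cohomology, and a direct cocycle chase verifies that the abstract iso coincides with the evaluation map.
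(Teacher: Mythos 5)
Your proof is correct but takes a genuinely different route from the paper's. The paper argues directly with a single complex: it constructs a left inverse $\alpha_K$ to the evaluation map by averaging each $G$-variable over $K$, and then exhibits the round-trip $\alpha=\mathrm{ev}\circ\alpha_K$ as chain-homotopic to the identity via one telescoping (prism-type) homotopy $H$ in which the variables $g_i,\dots,g_{q-1}$ are successively replaced by their $K$-averaged versions. You instead set up the auxiliary bicomplex $D^{r,s}=C_c(G^{r+1}\times(G/K)^{s+1},V)^G$, prove exactness of the augmented rows and columns, and run the usual two-filtrations comparison on $\mathrm{Tot}(D)$. Your column contraction (inserting $g_0K$) is elementary; the genuine point of divergence is your row contraction $h_\leftarrow$: since no $G$-equivariant section of $G\to G/K$ exists, you combine a merely continuous Iwasawa section $\sigma$ with averaging over the compact fiber $K$, and you correctly observe that left-invariance of the Haar measure on $K$ makes the resulting operator independent of the choice of $\sigma$ within each fiber and hence $G$-equivariant --- a device that does not appear in the paper. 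What your route buys is a cleaner conceptual picture (two resolutions of the same cohomology); what it costs is a second homotopy, and one step that you state but do not carry out: identifying the abstract isomorphism produced by the bicomplex with the map induced by evaluation at the basepoint $K$. Since $\epsilon_h\circ\mathrm{ev}$ and $\epsilon_v$ land in different summands $D^{s,0}$ and $D^{0,s}$ of $\mathrm{Tot}^s(D)$, one must still produce a chain homotopy between them (or use the explicit partial contractions to trace the zig-zag); this is routine but not automatic and should be written down before the lemma is considered fully proved.
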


\begin{proof}
Since $K$ is compact, it admits a finite Haar measure $\mu_K$. We define a left inverse of the evaluation on $K$ by 
$$
\alpha_K:C_c(G^{q+1},C_c((G/P)^{(p)}))^G\longrightarrow C_c((G/K)^{q+1},C_c((G/P)^{(p)}))^G$$
by
\begin{align*}
&\alpha_K(f)(g_0K,\dots,g_qK)(h_1P,\ldots ,h_pP)=\\
&\int_K \ldots \int_K f(g_0k_0,\ldots,g_qk_q)(h_1P,\ldots,h_pP)d\mu_K(k_0)\ldots d\mu_K(k_q), \end{align*}
for $f\in C_c(G^{q+1},C_c((G/P)^{(p)}))^G$. 
Notice that the integrals are all well-defined since $\mu_K$ is finite and $f$ is continuous. It is obvious that $\alpha_K$ is a left inverse to the evaluation on $K$. 

The composition of  $\alpha_K$ followed by evaluation is simply
\begin{align*}
&\alpha(f)(g_0,\ldots,g_q) (h_1P,\ldots ,h_pP)=\\
&\int_K \ldots \int_K f(g_0k_0,\ldots,g_qk_q)(h_1P,\ldots,h_pP)d\mu_K(k_0)\ldots d\mu_K(k_q), \end{align*}
for $f\in C_c(G^{q+1},C_c((G/P)^{(p)}))^G$. 

To prove that $\alpha$ is homotopic to the identity we define the following chain homotopy
$$
H:C_c(G^{q+1},C_c((G/P)^{(p)}))^G \rightarrow C_c(G^{q},C_c((G/P)^{(p)}))^G,
$$
$$
(H f)(g_0,\ldots,g_{q-1})(h_1P,\ldots,h_pP):=
$$
$$
\sum_{i=0}^{q-1} (-1)^i \int_K \ldots \int_K f(g_0,\ldots,g_i,g_ik_i,\ldots,g_{q-1}k_{q-1})(h_1P,\ldots,h_pP)d\mu_K(k_i)\ldots d\mu_K(k_{q-1}). 
$$
A standard computation shows that 
$$
\mathrm{Id}-\alpha=d(H\alpha),
$$
and the statement is proved. 
\end{proof}

\begin{thm}\label{thm vanishing gp unbounded}
For every $q \geq 1$ and $p \geq 3$ we have that 
$$
H_c^q(G,C_c((G/P)^{(p)})) \cong 0.
$$
\end{thm}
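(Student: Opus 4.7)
The plan is to apply Lemma \ref{K invariant representative} to replace the cocomplex $C_c(G^{*+1}, C_c((G/P)^{(p)}))^G$ by $C_K^{*,p} := C_c((G/K)^{*+1}, C_c((G/P)^{(p)}))^G$, and then to exhibit an explicit contracting homotopy on the latter. This is where the barycenter map \eqref{eq intro bar 3} plays a decisive role and replaces Monod's vanishing argument in \cite[Proposition 5.1]{Monod}, which does not transport to the continuous setting.

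The key geometric input I will use is a continuous $G$-equivariant map $b: (G/P)^{(p)} \to G/K$. For $p = 3$ I take $b := \mathrm{bar}_3$. For $p > 3$ I precompose $\mathrm{bar}_3$ with the projection $(G/P)^{(p)} \to (G/P)^{(3)}$ onto, say, the first three coordinates. This projection is continuous and $G$-equivariant, and it lands in $(G/P)^{(3)}$ because by Definition \ref{Def generic} any three-element subtuple of a generic $p$-tuple is still generic (the conditions defining genericity are imposed on all triples).

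Given such a $b$, I define a homotopy operator $h: C_K^{q,p} \to C_K^{q-1,p}$ for $q \geq 1$ by inserting the barycenter as the zeroth $G/K$-coordinate, namely
$$(hf)(x_0, \ldots, x_{q-1})(\bar y) := f(b(\bar y), x_0, \ldots, x_{q-1})(\bar y),$$
where $\bar y = (y_1 P, \ldots, y_p P) \in (G/P)^{(p)}$. The $G$-invariance of $hf$ follows at once from the $G$-equivariance of $b$ together with the $G$-invariance of $f$, while continuity in both arguments is preserved because $b$ and $f$ are continuous. A direct sign-chase, identical to the standard contraction in the bar resolution on $G/K$, then yields $\delta h + h \delta = \mathrm{id}$ on $C_K^{q,p}$ for every $q \geq 1$. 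Hence $C_K^{*,p}$ is acyclic in positive degrees, and combined with Lemma \ref{K invariant representative} this gives the desired vanishing.

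The only nontrivial ingredient is the existence of the $G$-equivariant, continuous barycenter $\mathrm{bar}_3$ from \cite[Corollary 4]{BucSavProj}; without it, the cocomplex is not visibly acyclic, and this is precisely why the statement requires $p \geq 3$. Beyond invoking this input, I expect every verification to be routine, so I do not anticipate a genuine obstacle in the argument.
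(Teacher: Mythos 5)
Your proposal is correct and follows essentially the same route as the paper: after reducing via Lemma \ref{K invariant representative}, the paper's contracting homotopy is exactly yours, inserting $\mathrm{bar}_3$ of the first three boundary points as a new zeroth $G/K$-variable. Your explicit remark that sub-triples of generic tuples remain generic is a detail the paper leaves implicit, but otherwise the two arguments coincide.
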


\begin{proof}
Thanks to Lemma \ref{K invariant representative} it is sufficient to construct a contracting homotopy for the cocomplex 
$$
(C_K^{p,q},d^\uparrow).
$$

Since in the coefficient module we have at least three points on the boundary, we will exploit the barycenter map of Section \ref{sec barycenter}. More precisely, we define
$$
H:C_K^{p,q} \rightarrow C_K^{p-1,q},
$$ 
$$
(H f)(g_0K,\dots,g_{p-1}K)(h_1P,\ldots,h_qP):=
$$
$$
f(\mathrm{bar}_3(h_1P,h_2P,h_3P),g_0K,\ldots,g_{p-1}K)(h_1P,\ldots,h_qP). 
$$
We claim that $H$ is well-defined. For any $f \in C_K^{p,q}$, the function $Hf$ is continuous, being the composition of continuous functions. In a similar way, $Hf$ is $G$-invariant by the $G$-equivariance of the map $\mathrm{bar}_3$. A direct computation shows that 
$$
f=H(df)+d(Hf),
$$
which precisely means that $H$ is a contracting homotopy for the cocomplex, as claimed. This concludes the proof. 
\end{proof}

\begin{cor}\label{cor vanishing 3 points}
For every $q \geq 1$ and every $p \geq 3$ we have that 
$$
H^q_c(G,C_{c,\alt}((G/P)^{(p)})) \cong 0. 
$$
\end{cor}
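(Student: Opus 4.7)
The plan is to deduce this corollary directly from Theorem \ref{thm vanishing gp unbounded}, using the splitting of the coefficient module into alternating and non-alternating parts.

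Recall from Equation \eqref{eq coefficients splitting} that the alternation projector $\mathrm{Alt}_p$ is a $G$-equivariant idempotent on $C_c((G/P)^{(p)})$ (the space $(G/P)^{(p)}$ is $\mathrm{Sym}(p)$-invariant so permutation of arguments makes sense), and it induces a direct-sum decomposition
$$C_c((G/P)^{(p)}) = C_{c,\alt}((G/P)^{(p)}) \oplus C_{c,\nalt}((G/P)^{(p)})$$
as $G$-modules. Functoriality of continuous group cohomology with respect to direct sums of coefficient modules then yields a corresponding splitting
$$H^q_c(G,C_c((G/P)^{(p)})) \cong H^q_c(G,C_{c,\alt}((G/P)^{(p)})) \oplus H^q_c(G,C_{c,\nalt}((G/P)^{(p)}))$$
for every $q \geq 0$ and every $p \geq 1$.

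For $q \geq 1$ and $p \geq 3$, Theorem \ref{thm vanishing gp unbounded} shows that the left-hand side vanishes. Both direct summands on the right must therefore vanish as well, and in particular
$$H^q_c(G,C_{c,\alt}((G/P)^{(p)})) \cong 0,$$
which is the statement to prove. There is no real obstacle here: the only thing to check carefully is that the homotopy $H$ built from the barycenter map in the proof of Theorem \ref{thm vanishing gp unbounded} respects the alternating subcomplex, but this is automatic because $H$ acts on the $G$-variable and commutes with the $\mathrm{Sym}(p)$-action on the boundary coordinates, so the splitting passes through the whole argument.
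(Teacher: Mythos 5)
Your proof is correct and follows exactly the same route as the paper's: invoke the splitting from Equation \eqref{eq coefficients splitting} and conclude from Theorem \ref{thm vanishing gp unbounded} that both summands must vanish. The closing remark about the homotopy $H$ respecting the alternating subcomplex is harmless but unnecessary here, since the argument works purely at the level of the cohomology splitting rather than on subcomplexes.
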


\begin{proof}
By Equation \eqref{eq coefficients splitting} we have a splitting 
$$
H^q_c(G,C_c((G/P)^{(p)})) \cong H_c^q(G,C_{c,\alt}((G/P)^{(p)})) \oplus H_c^q(G,C_{c,\nalt}((G/P)^{(p)})).
$$
Theorem \ref{thm vanishing gp unbounded} implies that the left-hand side of the previous equation vanishes. As a consequence the two summands on the right-hand side must vanish as well, and the statement is proved. 
\end{proof}

\begin{proof}[Proof of Theorem \ref{thm_continuous}]

We start by computing explicitly the first page of the spectral sequence
$$
E_1^{p,q}=(H^q(C^{\ast,p},d^\uparrow),d_1=d^\rightarrow). 
$$
By Corollary \ref{cor vanishing 3 points} we have that $E_1^{p,q} \cong 0$ whenever $q \geq 1$ and $p \geq 3$. The first column $E_1^{0,q}$ is naturally isomorphic to the continuous cohomology of $G$, namely $H^q_c(G)$. By Lemma \ref{lem first two columns} we have that the columns with coefficients having either one or two points on the boundary are isomorphic to 
$$
E^{1,q}_1 \cong H^q_c(A), \ \ \ E^{2,q}_1 \cong \mathrm{Im}(\Pi^q_{w_0}),
$$
for $q \geq 1$.
Finally the bottom row coincides with the $G$-invariant continuous alternating functions on $(G/P)^{(p)}$, that is
$$
E^{p,0}_1 \cong C_{c,\alt}((G/P)^{(p)})^G.
$$ 

\begin{figure}[!h]
\centering
\begin{tikzpicture}
  \matrix (m) [matrix of math nodes,
             nodes in empty cells,
             nodes={minimum width=9ex,
                    minimum height=9ex,
                    outer sep=-3pt},
             column sep=2ex, row sep=-1ex,
             text centered,anchor=center]{
         q      &         &          &          & \\
          \cdots & \cdots & \cdots & \cdots   & \cdots \\
          3    & \ H^3_c(G) \  & \ H^3_c(A) \ &\  \mathrm{Im}(\Pi^3_{w_0}) \ & 0 & \ \cdots & \\
          2    &  \ H^2_c(G) \ &\  H^2_c(A) \  & \ \mathrm{Im}(\Pi^2_{w_0}) \ & 0 & \  \cdots &  \\
          1    & \ H^1_c(G) \ & \ H^1_c(A) \ & \  \mathrm{Im}(\Pi^1_{w_0}) \  & 0 & \   \cdots & \\
          0     & \ \mathbb{R}\   &\  \C_{c}(G/P)^G \ & \  C_{c,\alt}((G/P)^{(2)})^G  \ & \   C_{c,\alt}((G/P)^{(3)})^G \  &  \ \cdots & \\
    \quad\strut &   0  &  1  &  2  &  3  &  \cdots & p \strut \\};


\draw[->](m-3-2.east) -- (m-3-3.west)node[midway,above ]{$0$};
\draw[->](m-3-3.east) -- (m-3-4.west)node[midway,above ]{$\Pi^3_{w_0}$};
\draw[->](m-3-4.east) -- (m-3-5.west);

\draw[->](m-4-2.east) -- (m-4-3.west)node[midway,above ]{$0$};
\draw[->](m-4-3.east) -- (m-4-4.west)node[midway,above ]{$\Pi^2_{w_0}$};
\draw[->](m-4-4.east) -- (m-4-5.west);

\draw[->](m-5-2.east) -- (m-5-3.west)node[midway,above ]{$0$};
\draw[->](m-5-3.east) -- (m-5-4.west)node[midway,above ]{$\Pi^1_{w_0}$};
\draw[->](m-5-4.east) -- (m-5-5.west);

\draw[->](m-6-2.east) -- (m-6-3.west)node[midway,above ]{$\delta$};
\draw[->](m-6-3.east) -- (m-6-4.west)node[midway,above ]{$\delta$};
\draw[->](m-6-4.east) -- (m-6-5.west)node[midway,above ]{$\delta$};

\draw[thick] (m-1-1.east) -- (m-7-1.east) ;
\draw[thick] (m-7-1.north) -- (m-7-7.north) ;
\end{tikzpicture}
\caption{The first page $E_1$}\label{fig first page unbounded}
\end{figure}

The differential $d_1=d^\rightarrow$ on the first row boils down to the usual homogeneous differential $\delta$. For $q \geq 1$, the differential 
$$
d_1:H^q_c(G) \cong E^{0,q}_1 \longrightarrow E^{1,q}_1 \cong H^q_c(A)
$$
coincides with the restriction map multiplied by $(-1)^{p+1}$. By \cite[Corollary 3]{Wienhard} the restriction map vanishes identically, thus the same holds for the differentials from the first column to the second one. Finally, the same proof as \cite[Theorem 3]{BucSavAlt} shows that the differentials from the second column to the third one are conjugated to the operator $\Pi^q_{w_0}$. We depict the first page $E_1$ in Figure \ref{fig first page unbounded}.

The second page $E_2$ is now easily computed. The isomorphism $H^0_c(G \curvearrowright G/P) \cong \mathbb{R}$ implies that $E^{0,0}_2 \cong E^{0,1}_2 \cong 0$. For $p \geq 2$, on the row $q=0$ appears the cohomology on $(G/P)^{(p)}$ shifted by one, that is 
$$
E^{p,0}_2 \cong H^{p-1}_c(G \curvearrowright G/P). 
$$
The continuous cohomology of $G$ on the first column $p=0$ is preserved, namely $E^{0,q}_2 \cong H^q_c(G)$. By the surjectivity of the differentials from the column $q=1$ to the column $q=2$, we obtain that 
$$
E^{1,q}_2 \cong H^q_c(A)^{w_0}, \ \ \ E^{2,q}_2 \cong 0. 
$$
We report the second page $E_2$ in Figure \ref{fig second page unbounded}. 

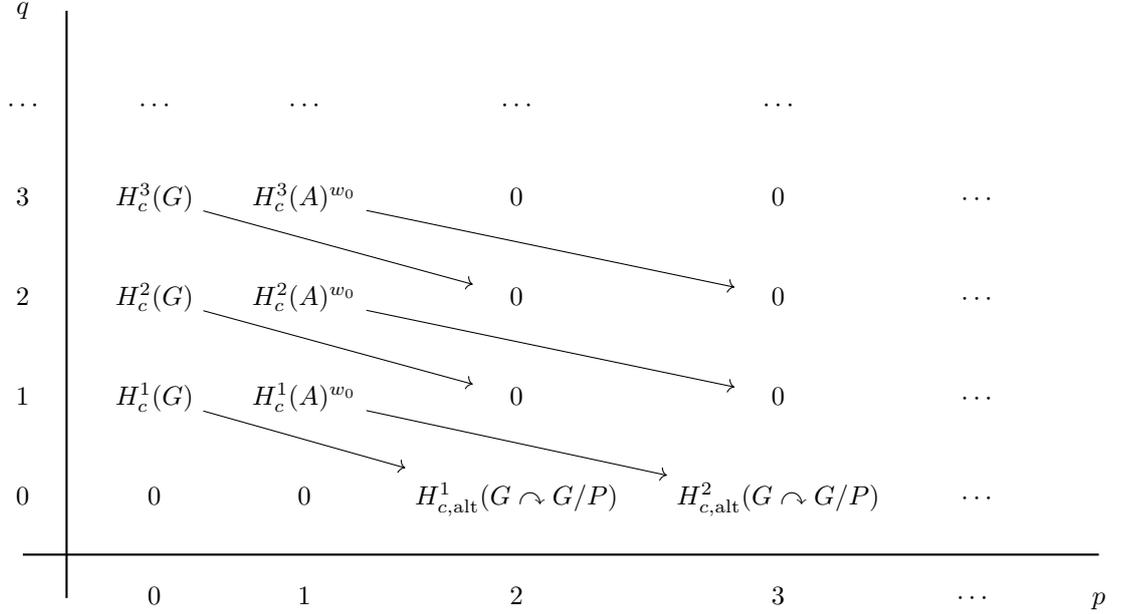
\begin{figure}[!h]
\centering
\begin{tikzpicture}
  \matrix (m) [matrix of math nodes,
             nodes in empty cells,
             nodes={minimum width=9ex,
                    minimum height=9ex,
                    outer sep=-3pt},
             column sep=2ex, row sep=-1ex,
             text centered,anchor=center]{
	q      &         &          &          & \\
          \cdots & \cdots & \cdots & \cdots   & \cdots \\
          3    & \ H^3_c(G) \  & \ H^3_c(A)^{w_0} \ &\  0 \ & 0 & \ \cdots & \\
          2    &  \ H^2_c(G) \ &\  H^2_c(A)^{w_0} \  & \  0 \ & 0 & \  \cdots &  \\
          1    & \ H^1_c(G) \ & \ H^1_c(A)^{w_0} \ & \  0 \  & 0 & \   \cdots & \\
          0     & \ 0\   &\  0 \ & \  H^1_{c,\alt}(G \curvearrowright G/P)  \ & \  H^2_{c,\alt}(G \curvearrowright G/P) \  &  \ \cdots & \\
    \quad\strut &   0  &  1  &  2  &  3  &  \cdots & p \strut \\};


\draw[->](m-3-2) -- (m-4-4);
\draw[->](m-3-3) -- (m-4-5);

\draw[->](m-4-2) -- (m-5-4);
\draw[->](m-4-3) -- (m-5-5);

\draw[->](m-5-2) -- (m-6-4);
\draw[->](m-5-3) -- (m-6-5);

\draw[thick] (m-1-1.east) -- (m-7-1.east) ;
\draw[thick] (m-7-1.north) -- (m-7-7.north) ;
\end{tikzpicture}
\caption{The second page $E_2$} \label{fig second page unbounded}
\end{figure}

Since we do not know if the spectral sequence degenerates or not, we will exploit now the map between bicomplexes
\begin{equation} \label{eq bicomplexes}
C^{p,q} \longrightarrow M^{p,q},
\end{equation}
where $M^{p,q}$ is the bicomplex of Equation \eqref{measurable alternating bicomplex}. We denote by
$$
E^{p,q}_{1,m}:=(H^q(M^{\ast,p},d^\uparrow),d_1=d^\rightarrow).
$$
the first page of the spectral sequence generated by the bicomplex on the right-hand side of Equation \eqref{eq bicomplexes}. The page $E_{1,m}$ is reported in \cite[Figure 3]{BucSavAlt}. The maps determined by Equation \eqref{eq bicomplexes} induce maps
\begin{equation}\label{eq first pages}
E_1^{p,q} \longrightarrow E_{1,m}^{p,q}
\end{equation}
for every $p,q \geq 0$. We claim that those maps are isomorphism on each column for $q \geq 1$. When $p=0$ this is precisely the statement of Austin and Moore \cite[Theorem A]{AM}. Let us prove that the map $E^{1,q}_1 \rightarrow E^{1,q}_{1,m}$ induced up to conjugation by the inclusion 
\begin{equation}\label{eq inclusion continuous}
C_c(G^{q+1})^P \longrightarrow L^0(G^{q+1})^P,
\end{equation}
is an isomorphism. Notice that the inclusion of Equation \eqref{eq inclusion continuous} can be restricted to $A$ invariant cochains obtaining the following commutative diagram
$$
\xymatrix{
C_c(G^{q+1})^P \ar[rr] \ar[d] && L^0(G^{q+1})^P \ar[d] \\
C_c(G^{q+1})^A \ar[rr] && L^0(G^{q+1})^A.
}
$$
By \cite[Proposition 3.1]{Monod} the columns of the previous diagram are isomorphisms in cohomology. Additionally, the inclusion on the bottom row can be decomposed as
$$
C_c(G^{q+1})^A \rightarrow C_c(A^{q+1})^A \rightarrow L^0(A^{q+1})^A \rightarrow L^0(G^{q+1})^A,
$$
where the first function is the usual restriction map, the second one is the inclusion and the third one is induced by the $A$-equivariant projection $\pi_A:G \rightarrow A$, where $\pi_A$ is defined in terms of the Iwasawa decomposition $G=ANK$ as $\pi_A(ank)=a$, for $a\in A,\ n\in N$ and $k\in K$.  By \cite[Theorem A]{AM} and \cite[Theorem 2]{Moore} all of these maps induce isomorphisms in cohomology, thus the claim follows. The same argument also applies to the column $p=2$, so 
$$
E^{2,q}_1 \longrightarrow E^{2,q}_{1,m}
$$
are all isomorphisms, again for $q \geq 1$. 

We now pass to the second pages, where the page $E_{2,m}$ is depicted in \cite[Figure 4]{BucSavAlt}. The existence of the maps in  \eqref{eq first pages} implies that there exists a natural map 
\begin{equation}\label{eq second pages}
E^{p,q}_2 \longrightarrow E^{p,q}_{2,m},
\end{equation}
for every $p, q \geq 0$. Again for $q \geq 1$ these maps are all isomorphisms by \cite[Theorem 1]{AM}. As a consequence we obtain a commutative diagram
\begin{equation}\label{diagram A}
\xymatrix{
H^q_c(A)^{w_0} \ar[rr] \ar[rd] && H^q_{c,\alt}(G \curvearrowright G/P) \ar[ld]\\
& H^q_{m,\alt}(G \curvearrowright G/P), &
}
\end{equation}
where $H^q_{m,\alt}(G \curvearrowright G/P)$ is the cohomology of the cocomplex $(L^0_{\alt}((G/P)^{\ast+1})^G,\delta)$. By \cite[Theorem 3]{BucSavAlt} the map 
$$
H^q_c(A)^{w_0} \longrightarrow H^q_{m,\alt}(G \curvearrowright G/P)
$$
is injective, so the map 
$$
H^q_c(A)^{w_0} \longrightarrow H^q_{c,\alt}(G \curvearrowright G/P)
$$
is injective as well by the commutativity of Diagram \eqref{diagram A}. If we now pass to the quotients, the maps given by \eqref{eq second pages} determine another commutative diagram
\begin{equation}\label{eq diagram G}
\xymatrix{
H^q_c(G) \ar[rr] \ar[rd] && H^q_{c,\alt}(G \curvearrowright G/P)/H^q_c(A)^{w_0} \ar[ld] \\
& H^q_{m,\alt}(G \curvearrowright G/P)/H^q_c(A)^{w_0}. &
}
\end{equation}
Since \cite[Theorem 3]{BucSavAlt} guarantees that 
$$
H^q_c(G) \longrightarrow H^q_{m,\alt}(G \curvearrowright G/P)/H^q_c(A)^{w_0}
$$
is an isomorphism, we must have that 
$$
H^q_c(G) \longrightarrow H^q_{c,\alt}(G \curvearrowright G/P)/H^q_c(A)^{w_0}
$$
is injective and that 
$$
H^q_{c,\alt}(G \curvearrowright G/P)/H^p_c(A)^{w_0} \longrightarrow H^q_{m,\alt}(G \curvearrowright G/P)/H^p_c(A)^{w_0}
$$
is surjective. In particular 
$$
H^q_{c,\alt}(G \curvearrowright G/P)\longrightarrow H^q_{m,\alt}(G \curvearrowright G/P)
$$
also is surjective. 
\end{proof}

\section{Proof of Theorem \ref{thm_continuous_bounded}} \label{sec bounded case}
We follow the same strategy as for Theorem \ref{thm_continuous} in the previous section. We construct a bicomplex analogous to the one in  \eqref{bicomplex Kpq}, but this time using continuous bounded functions. More precisely, we define
$$
C_{b}^{p,q}:=C_{c,b}(G^{p+1},C_{c,b,\alt}(((G/P)^{(q)}))^G, 
$$
where $(G/P)^{(q)}$ is the space of generic $q$-tuples. The vertical differential 
$$
d^\uparrow:C^{p,q}_b \longrightarrow C^{p+1,q}_b
$$
is simply the homogeneous differential on the $G$-variable, whereas the horizontal one
$$
d^\rightarrow:C^{p,q}_b \longrightarrow C^{p,q+1}_b
$$
is the homogeneous differential on $(G/P)^{(q)}$ weighted with the sign $(-1)^{p+1}$. 

As before, the bicomplex $(C_b^{p,q},d^\rightarrow, d^\uparrow)$ determines two different spectral sequences. In contrast to the unbounded setting, the fact that we are now dealing with bounded functions, allows us to prove vanishing of the  first spectral sequence
\begin{equation}\label{eq first spectral bounded}
^I_bE^{p,q}_1:=(H^q(C^{p,\ast}_b,d^{\rightarrow}),d_1=d^\uparrow). 
\end{equation}

\begin{prop}\label{first bounded zero}
Let $^I_bE^{p,q}_1$ be the first page of the spectral sequence defined by \eqref{eq first spectral bounded}. Then the spectral sequence degenerates immediately, that is $^I_bE^{p,q}_1=0$ for every $p,q \geq 0$. 
\end{prop}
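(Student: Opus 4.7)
The plan is to construct a $G$-equivariant continuous contracting homotopy for the cocomplex $(C_b^{p,\ast}, d^\rightarrow)$ at each fixed $p\geq 0$. Using the maximal compact subgroup $K<G$ with normalized Haar measure $\mu_K$, I would define, for $f\in C_b^{p,q}$ with $q\geq 1$,
\[
(hf)(g_0,\ldots,g_p)(x_1,\ldots,x_{q-1}):=(-1)^{p+1}\int_K f(g_0,\ldots,g_p)(g_0kP,x_1,\ldots,x_{q-1})\,d\mu_K(k),
\]
and set $h:=0$ on $C_b^{p,0}$. The basepoint $g_0kP\in G/P$ is canonically produced from the $G$-variable $g_0$ and the integration variable $k$, so that averaging over $K$ plays the role, in the continuous setting, of the measurable pointwise basepoint trick used by Monod in his spectral sequence.

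To ensure $hf$ is well-defined I would first verify that for every $g_0\in G$ and every generic $(x_1,\ldots,x_{q-1})\in (G/P)^{(q-1)}$, the bad set
\[
B_{g_0,x_1,\ldots,x_{q-1}}:=\{k\in K\,:\,(g_0kP,x_1,\ldots,x_{q-1})\notin(G/P)^{(q)}\}
\]
is $\mu_K$-null. This uses that $K$ acts transitively on $G/P$, so that the pushforward of $\mu_K$ under $k\mapsto g_0kP$ is absolutely continuous with respect to the $K$-invariant probability measure on $G/P$, together with a direct inspection of Definition \ref{Def generic} which identifies the slice $\{y\in G/P\,:\,(y,x_1,\ldots,x_{q-1})\notin(G/P)^{(q)}\}$ as a finite union of proper closed subvarieties of $G/P$, hence of measure zero.

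The remaining verifications are: (i) joint continuity of $hf$, which follows from continuity of $f$ on the generic set together with dominated convergence, using boundedness of $f$ to majorize the integrand uniformly by $\|f\|_\infty$; boundedness of $hf$ with $\|hf\|_\infty\leq \|f\|_\infty$ is immediate. (ii) Alternation of $hf$ in $(x_1,\ldots,x_{q-1})$ and $G$-equivariance, both of which are immediate from the corresponding properties of $f$ and the identity $g\cdot(g_0kP)=(gg_0)kP$. (iii) The contracting homotopy identity $h\circ d^\rightarrow + d^\rightarrow\circ h = \mathrm{id}$, which follows from the usual telescoping of the homogeneous boundary differential: the summand obtained by deleting the first slot in $\delta f(g_0kP,x_1,\ldots,x_{q-1})$ equals $f(g_0,\ldots)(x_1,\ldots,x_{q-1})$ and integrates to $f$ (since the integrand is independent of $k$ and $\mu_K(K)=1$), while the remaining summands assemble, after integration, into $-d^\rightarrow hf$, the signs $(-1)^{p+1}$ in $d^\rightarrow$ and in $h$ matching pairwise.

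The main obstacle, and what distinguishes the bounded setting from the unbounded one treated in Section \ref{sec continuous theorem}, is precisely the construction of a \emph{continuous} homotopy. The naive pointwise replacement $(hf)(g_0,\ldots)(x_1,\ldots,x_{q-1}):=f(g_0,\ldots)(g_0P,x_1,\ldots,x_{q-1})$ which works measurably fails here because the locus where $(g_0P,x_1,\ldots,x_{q-1})$ is not generic is a genuine closed subset on which a continuous cochain on $(G/P)^{(q)}$ need not be defined. Averaging over $K$ diffuses this single choice into one supported on all of $G/P$, and the resulting integral is finite and continuous precisely because $f$ is bounded; this is the feature unavailable in the unbounded Section \ref{sec continuous theorem} and explains why the first spectral sequence vanishes at its first page in the bounded setting while it did not in the unbounded one.
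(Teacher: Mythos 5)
Your proposal is correct, and it takes a genuinely different route from the paper's. The paper first passes from the $G$-invariant homogeneous resolution in the group variable to its \emph{inhomogeneous} variant via \cite[Proposition 7.4.12]{Monodbook}, which eliminates the $G$-equivariance constraint altogether; it then observes that the boundary cocomplex $0 \to \mathbb{R} \to C_{c,b}(G/P) \to C_{c,b,\alt}((G/P)^{(2)}) \to \cdots$ is exact by the (non-equivariant) contracting homotopy of \cite[Lemma 7.5.5]{Monodbook}, namely integration of the first slot against the $K$-invariant probability measure on $G/P$, and finally applies exactness of the functor $C_{c,b}(G^{p},\,\cdot\,)$ from \cite[Lemma 8.2.4]{Monodbook}. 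You instead build an explicitly $G$-equivariant contracting homotopy directly in the homogeneous picture: averaging over the basepoints $g_0kP$ for $k\in K$ is the same as integrating against the pushforward measure $g_0\cdot\nu$ on $G/P$, and the dependence of this measure on $g_0$ is exactly what restores $G$-equivariance, making the detour through the inhomogeneous resolution unnecessary. Both arguments rest on the same underlying averaging idea and both use boundedness in an essential way to obtain a finite integral that varies continuously via dominated convergence; your version is self-contained and makes the equivariance mechanism transparent, while the paper's is shorter by delegating to Monod's book. Your closing remark on why this construction is unavailable in the unbounded setting of Section~\ref{sec continuous theorem} is also accurate and reflects the paper's own reasoning for working only up to surjectivity there.
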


\begin{proof}
The first page of the spectral sequence is obtained by considering the horizontal differential $d^\rightarrow$, namely it is obtained by the cohomology of the cocomplex
\begin{equation}\label{eq cocomplex horizontal}
\rightarrow C_{c,b}(G^{p+1},C_{c,b,\alt}(((G/P)^{(q-1)}))^G \rightarrow C_{c,b}(G^{p+1},C_{c,b,\alt}(((G/P)^{(q)}))^G \rightarrow .
\end{equation}
By \cite[Proposition 7.4.12]{Monodbook} the cohomology of the above cocomplex is the same as the one of its \emph{inhomogeneous} variant, which is obtained by getting rid of $G$-invariance and by deleting one $G$-variable. 
\begin{equation}\label{eq inhomogeneous bounded}
\rightarrow C_{c,b}(G^{p},C_{c,b,\alt}(((G/P)^{(q-1)})) \rightarrow C_{c,b}(G^{p},C_{c,b,\alt}(((G/P)^{(q)})) \rightarrow .
\end{equation}
It is worth noticing that the differential on the boundary variable is still the homogeneous one, suitably weighted with a sign. By the proof of \cite[Lemma 7.5.5]{Monodbook} the cocomplex
$$
0 \rightarrow \mathbb{R} \rightarrow  C_{c,b}(G/P) \rightarrow C_{c,b,\alt}((G/P)^{(2)}) \rightarrow
$$
admits a contracting homotopy given by integration along the first variable and hence it is exact. To obtain the cohomology of the cocomplex of Equation \eqref{eq cocomplex horizontal} we are applying the functor $C_{c,b}(G^p, \ \cdot \ )$, which is exact by \cite[Lemma 8.2.4]{Monodbook}. This concludes the proof. 
\end{proof} 

We now turn to the second spectral sequence, namely the one with first page
$$
^{II}_bE_1^{p,q}:=(H^q(C_b^{\ast,p},d^\uparrow),d_1=d^\rightarrow). 
$$
The $(p,q)$-entry of this first page is given by
\begin{equation}\label{pq entry bounded}
^{II}_bE_1^{p,q}:=H_{c,b}^q(G,C_{c,b,\alt}((G/P)^{(p)})).
\end{equation}
It is worth noticing that the restriction of the alternation map $\mathrm{Alt}_p$ to $C_{c,b}((G/P)^{(p)})$ is continuous with respect to the supremum norm. As a consequence we have a splitting
\begin{equation}\label{splitting bounded}
C_{c,b}((G/P)^{(p)}) \cong C_{c,b,\alt}((G/P)^{(p)}) \oplus C_{c,b,\nalt}((G/P)^{(p)})
\end{equation}
and the decomposition holds at the level of Banach spaces (in particular each subspace is closed and complemented). Thus, by \cite[Corollary 8.2.10]{Monodbook} we have a similar splitting at cohomology level
\begin{equation}\label{splitting bounded cohomology}
H^q_{c,b}(G,C_{c,b}((G/P)^{(p)})) \cong H^q_{c,b}(G,C_{c,b,\alt}((G/P)^{(p)})) \oplus H^q_{c,b}(G,C_{c,b,\nalt}((G/P)^{(p)})). 
\end{equation}

Our aim is to identify all the terms in the page $^{II}_bE_1$. We are going to prove that all the entries which are not either on the first column or on the first row must vanish. We start proving this statement for the coefficient module with one or two points in the boundary. We will exploit the following version of Eckmann-Shapiro induction.

\begin{prop}\label{continuous induction iso}
Let $G$ be a connected semisimple Lie group with finite center. Let $L < G$ be a closed subgroup. Then we have the following isomorphism
$$
H^q_{c,b}(G,C_{c,b}(G/L)) \cong H^q_{c,b}(L),
$$
for every $q \geq 0$. 
\end{prop}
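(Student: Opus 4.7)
The plan is to establish a Shapiro-type isomorphism in direct analogy with the unbounded case \eqref{iso shapiro P}. The first step is to set up the natural isomorphism of cocomplexes
$$
\Psi : C_{c,b}(G^{q+1})^L \longrightarrow C_{c,b}(G^{q+1}, C_{c,b}(G/L))^G,
$$
defined by $(\Psi f)(g_0, \ldots, g_q)(gL) := f(g^{-1}g_0, \ldots, g^{-1}g_q)$, where $L$ acts diagonally on the left on $G^{q+1}$. A direct verification shows that $\Psi$ is well-defined (thanks to the $L$-invariance of $f$), bounded, continuous, $G$-equivariant, and intertwines the homogeneous coboundary operators; the inverse is given by evaluation at the base coset $L \in G/L$.

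With this identification, the right-hand side computes $H^q_{c,b}(G, C_{c,b}(G/L))$ by definition, and it remains to identify the cohomology of $(C_{c,b}(G^{\ast+1})^L, \delta)$ with $H^q_{c,b}(L)$. For this I would exhibit $(C_{c,b}(G^{\ast+1}), \delta)$ as a strong resolution of $\mathbb{R}$ by relatively injective Banach $L$-modules in the framework of \cite{Monodbook}. The augmented complex is exact via the standard contracting homotopy by evaluation at a fixed base point of $G$. The relative injectivity of each $C_{c,b}(G^{n+1})$ under the restricted $L$-action can be obtained by adapting Monod's arguments for continuous bounded $G$-modules to the closed subgroup $L < G$.

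The main obstacle will be verifying relative injectivity in the continuous bounded setting, since Monod's framework is most naturally formulated with $L^\infty$-coefficients. The adaptation requires that the extension and averaging procedures producing the required $L$-equivariant splittings preserve continuity and boundedness; this is technical but should follow from the topological structure of $G$ and the closedness of $L$, together with suitable choices of continuous or Borel sections of the projection $G \to G/L$.
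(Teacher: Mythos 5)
Your proposal takes a genuinely different route from the paper's, and it has a gap that you flag yourself but whose severity you underestimate. You set up a Shapiro-type isomorphism $\Psi : C_{c,b}(G^{q+1})^L \to C_{c,b}(G^{q+1},C_{c,b}(G/L))^G$ in direct analogy with \eqref{iso shapiro P}, and then try to exhibit $(C_{c,b}(G^{\ast+1}),\delta)$ as a strong, relatively injective resolution of $\mathbb{R}$ in the category of Banach $L$-modules. The problem is that $C_{c,b}(G^{n+1})$ with the sup norm and the left-regular $L$-action is not even a \emph{continuous} Banach $L$-module in Monod's sense: $l\cdot f\to f$ in sup norm as $l\to e$ requires $f$ to be (left) uniformly continuous, and a generic bounded continuous function is not. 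So before relative injectivity can even be discussed, one must pass to the maximal continuous submodule, and at that point one is no longer working with the full $C_{c,b}$-complex. A closely related difficulty infects the isomorphism $\Psi$ itself: for $(\Psi f)(g_0,\dots,g_q)$ to depend \emph{norm}-continuously on $(g_0,\dots,g_q)$ one again needs uniform continuity of $f$, so the two sides of $\Psi$ must be interpreted with some care.

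The paper's proof sidesteps exactly these issues by invoking Monod's maximal continuous submodule functor $\mathcal{C}(\cdot)$ from \cite[Proposition~6.1.5]{Monodbook}, which gives $H^q_{c,b}(G,E)\cong H^q_{c,b}(G,\mathcal{C}E)$ for any Banach $G$-module $E$. Combined with the identification $\mathcal{C}C_{c,b}(G)\cong\mathcal{C}L^\infty(G)$ (the proof of \cite[Proposition~4.4.2]{Monodbook}) and the commutation of $\mathcal{C}(\cdot)$ with taking $L$-invariants, this reduces the statement to the standard $L^\infty$-induction isomorphism $H^q_{c,b}(G,L^\infty(G/L))\cong H^q_{c,b}(L)$ of \cite[Proposition~10.1.3]{Monodbook}, where all the relative-injectivity machinery is already in place. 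If you want to make your Shapiro-style argument work, you would effectively have to reprove these facts; the cleaner path is to notice, as the paper does, that the bounded theory only ever sees $\mathcal{C}E$, and to transfer the problem to $L^\infty$ coefficients where the resolution-theoretic tools are available off the shelf.
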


\begin{proof}
Before starting the proof, we recall that, given a Banach $G$-module $E$, its \emph{maximal continuous submodule} is defined by
$$
\mathcal{C}E:=\{ v \in E \ | \ g.v \rightarrow v \ \textup{when} \  g \rightarrow e\}. 
$$
In the computation of continuous bounded cohomology, all the information is contained in the maximal continuous submodule. More precisely, Monod \cite[Proposition 6.1.5]{Monodbook} proved the following isometric isomorphism
$$
H^q_{c,b}(G,E) \cong H^q_{c,b}(G,\mathcal{C}E),
$$
for any Banach $G$-module $E$ and any degree $p \geq 0$. As a consequence, we have that 
$$
H^q_{c,b}(G,C_{c,b}(G/L)) \cong H^q_{c,b}(G,\mathcal{C}C_{c,b}(G/L)),
$$
for any $q \geq 0$.

By the universal property of quotients, the Banach space $C_{c,b}(G/L)$ is isomorphic to $L$-invariant continuous bounded functions on $G$ (notice that $L$-invariants are considered with respect to the right action of $G$ on the bimodule $C_{c,b}(G)$). In this way we obtain that 
$$
H^q_{c,b}(G,\mathcal{C}C_{cb}(G/L)) \cong H^q_{c,b}(G,\mathcal{C}(C_{c,b}(G)^L)) \cong H^q_{c,b}(G,\mathcal{C}C_{c,b}(G)^L),
$$
where the last isomorphism follows by the fact that $\mathcal{C}( \ \cdot \ )$ and the $L$-invariants commute. As shown in the proof of \cite[Proposition 4.4.2]{Monodbook}, we have the following isomorphism
$$
\mathcal{C}C_{c,b}(G) \cong \mathcal{C}L^\infty(G). 
$$
As a consequence, we obtain that 
$$
H^q_{c,b}(G,\mathcal{C}C_{c,b}(G)^L) \cong H^q_{cb}(G,\mathcal{C}L^\infty(G)^L) \cong H^q_{c,b}(G,\mathcal{C}L^\infty(G/L)), 
$$
where the last isomorphism follows from the fact that 
$$
L^\infty(G)^L \cong L^\infty(G/L),
$$
in virtue of Fubini's theorem. 

Thanks to the standard induction isomorphism \cite[Proposition 10.1.3]{Monodbook} in bounded cohomology, we finally obtain that 
$$
H^q_{c,b}(G,\mathcal{C}L^\infty(G/L)) \cong H^q_{c,b}(G,L^\infty(G/L)) \cong H^q_{c,b}(L).
$$
This concludes the proof. 
\end{proof}

\begin{cor}\label{cor second third columns}
For every $q \geq 1$ we have that 
$$
H^q_{c,b}(G,C_{c,b}(G/P)) \cong H^q_{c,b}(G,C_{c,b,\alt}((G/P)^{(2)})) \cong 0. 
$$
\end{cor}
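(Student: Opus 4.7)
The plan is to apply Proposition \ref{continuous induction iso} twice, combined with the vanishing of continuous bounded cohomology of amenable groups in positive degree.

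For the first isomorphism, I would take $L = P$ in Proposition \ref{continuous induction iso} to obtain
$$H^q_{c,b}(G, C_{c,b}(G/P)) \cong H^q_{c,b}(P)$$
for every $q \geq 0$. Since $P = MAN$ is the semidirect product of the compact group $M$ with the solvable group $AN$, it is amenable, so $H^q_{c,b}(P) = 0$ for every $q \geq 1$.

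For the second isomorphism, I would exploit the $G$-equivariant diffeomorphism $G/MA \to (G/P)^{(2)}$ sending $gMA \mapsto (gP, gw_0P)$ which was already used in the proof of Lemma \ref{lem first two columns}. This identifies $C_{c,b}((G/P)^{(2)})$ with $C_{c,b}(G/MA)$ as Banach $G$-modules, and Proposition \ref{continuous induction iso} applied with $L = MA$ yields
$$H^q_{c,b}(G, C_{c,b}((G/P)^{(2)})) \cong H^q_{c,b}(MA).$$
Since $M$ is compact and $A$ is abelian (a split torus), $MA$ is amenable, hence the right-hand side vanishes for every $q \geq 1$. The statement for alternating cochains then follows at once from the Banach splitting \eqref{splitting bounded cohomology}, which exhibits $H^q_{c,b}(G, C_{c,b,\alt}((G/P)^{(2)}))$ as a direct summand of the above vanishing group.

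I do not foresee any real obstacle here: the argument is essentially the bounded analogue of the proof of Lemma \ref{lem first two columns}, with Proposition \ref{continuous induction iso} playing the role of the Shapiro-type induction isomorphism and the amenability of $P$ and of $MA$ replacing the use of \cite[Proposition 3.1]{Monod} and \cite[Theorem 9.1]{Blanc}. The only verification required is that the diffeomorphism $G/MA \to (G/P)^{(2)}$ induces a $G$-equivariant isometric isomorphism of Banach modules $C_{c,b}(G/MA) \cong C_{c,b}((G/P)^{(2)})$, which is immediate from the continuity of the diffeomorphism and of its inverse.
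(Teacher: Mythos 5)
Your proof is correct and follows exactly the paper's argument: Proposition \ref{continuous induction iso} with $L=P$ and $L=MA$, amenability of $P$ and $MA$ for vanishing in positive degree, and the Banach splitting \eqref{splitting bounded cohomology} to pass from $C_{c,b}((G/P)^{(2)})$ to its alternating summand.
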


\begin{proof}
By Proposition \ref{continuous induction iso} we can write 
$$
H^q_{c,b}(G,C_{c,b}(G/P)) \cong H^p_{c,b}(P) \cong 0,
$$
where the latter is zero because of the amenability of $P$ \cite[Corollary 7.5.11]{Monodbook}.

By  \eqref{splitting bounded cohomology}, the space $H^q_{c,b}(G,C_{c,b,\alt}((G/P)^{(2)}))$ is a direct summand of the cohomology group 
$$
H^p_{c,b}(G,C_{cb}((G/P)^{(2)})) \cong H^p_{cb}(G,C_{cb}(G/MA)).
$$
Exploiting once again Proposition \ref{continuous induction iso}, we immediately obtain that
$$
 H^q_{c,b}(G,C_{c,b}(G/MA)) \cong H^q_{c,b}(MA) \cong 0,
$$
where the vanishing statement follows by the amenability of $MA$ \cite[Corollary 7.5.11]{Monodbook}. Thus $H^q_{c,b}(G,C_{c,b,\alt}((G/P)^{(2)}))$ must vanish as well, and the statement is proved.
\end{proof}

We are left to show that, for $q \geq 1$, all the remaining columns but the first one are trivial.

\begin{thm}\label{thm vanishing gp bounded}
For every $q \geq 1$ and $p \geq 3$ we have that 
$$
H^q_{c,b}(G,C_{c,b}((G/P)^{(p)})) \cong 0. 
$$
\end{thm}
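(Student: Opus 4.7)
The plan is to adapt the proof of Theorem \ref{thm vanishing gp unbounded} to the bounded setting. The key observation is that the two crucial constructions used there, namely integration against the normalized Haar measure on the compact group $K$ and composition with the continuous barycenter map $\mathrm{bar}_3$, both automatically preserve the supremum norm. Consequently, everything that worked in $C_c$ will work verbatim in $C_{c,b}$.

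The first step is a bounded analogue of Lemma \ref{K invariant representative}. Writing
$$C_{K,b}^{*,p}:=C_{c,b}((G/K)^{*+1},C_{c,b}((G/P)^{(p)}))^G,$$
I claim that evaluation on the base point $K\in G/K$ induces an isomorphism in cohomology
$$H^q(C_{K,b}^{*,p})\cong H^q_{c,b}(G,C_{c,b}((G/P)^{(p)})).$$
The very same map $\alpha_K$ defined in the proof of Lemma \ref{K invariant representative} serves as a left inverse: if $\Vert f\Vert_\infty\leq M$ then $\Vert \alpha_K(f)\Vert_\infty\leq M$ because $\mu_K$ is a probability measure. Likewise the chain homotopy $H$ exhibited there is a finite signed sum of integrals of $f$ against $\mu_K$, hence a bounded operator. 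Thus the same argument gives the desired isomorphism on the bounded side, provided we accept the standard fact (see \cite[Ch.~7]{Monodbook}) that $C_{c,b}(G^{*+1},C_{c,b}((G/P)^{(p)}))^G$ computes the continuous bounded cohomology of $G$ with values in the Banach module $C_{c,b}((G/P)^{(p)})$.

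The second step is to define a contracting homotopy on $(C_{K,b}^{*,p},d^\uparrow)$ by exactly the same formula as in Theorem \ref{thm vanishing gp unbounded}, namely
$$
(Hf)(g_0K,\dots,g_{p-1}K)(h_1P,\dots,h_qP):=f(\mathrm{bar}_3(h_1P,h_2P,h_3P),g_0K,\dots,g_{p-1}K)(h_1P,\dots,h_qP).
$$
Since $\mathrm{bar}_3:(G/P)^{(3)}\to G/K$ is continuous and $G$-equivariant, $Hf$ is continuous, $G$-invariant, and $\Vert Hf\Vert_\infty\leq \Vert f\Vert_\infty$, so $H$ is a well-defined bounded operator $C_{K,b}^{p,q}\to C_{K,b}^{p-1,q}$. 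The verbatim identity $f=H(d^\uparrow f)+d^\uparrow (Hf)$ established in Theorem \ref{thm vanishing gp unbounded} then shows that this cocomplex is contractible, giving $H^q_{c,b}(G,C_{c,b}((G/P)^{(p)}))\cong 0$ for all $q\geq 1$ and $p\geq 3$.

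The only conceptual check is that the two ingredients of the unbounded proof extend to bounded coefficients, but both do so trivially because neither averaging over a compact group nor precomposition with a continuous map enlarges the supremum norm. Hence I do not expect any genuine obstacle; the argument is a direct translation of the unbounded vanishing theorem, with the observation that the explicit contracting homotopies involved are $L^\infty$-bounded.
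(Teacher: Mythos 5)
Your proposal is correct and follows exactly the same route as the paper: adapt Lemma~\ref{K invariant representative} to the bounded setting (noting that averaging over $K$ preserves the sup norm) and then reuse the contracting homotopy built from $\mathrm{bar}_3$, observing that precomposition with a continuous map does not increase the sup norm. Nothing to add.
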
 

\begin{proof}
The strategy is the same as Theorem \ref{thm vanishing gp unbounded}. First, it is worth noticing that the same argument that we exploited to show Lemma \ref{K invariant representative} can be adapted in the bounded context to prove that the cocomplex 
$$
(_bC_K^{p,q},d^\uparrow):=((C_{c,b}((G/K)^{p+1},C_{c,b}((G/P)^{(q)}))^G,d^\uparrow),
$$
computes the cohomology group $H^q_{c,b}(G,C_{c,b}((G/P)^{(p)}))$. As a consequence it is sufficient to build a contracting homotopy for the cocomplex $_bC_K^{p,q}$. The same homotopy defined in the proof of Theorem \ref{thm vanishing gp unbounded} works fine, since it preserves boundedness. This concludes the proof. 
\end{proof}

\begin{cor}\label{cor bounded 3 points}
For every $q \geq 1$ and $p \geq 3$ we have that
$$
H^q_{c,b}(G,C_{c,b,\alt}((G/P)^{(p)})) \cong 0.
$$
\end{cor}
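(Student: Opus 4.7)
The plan is to mimic exactly the proof of Corollary \ref{cor vanishing 3 points}, simply replacing the unbounded splitting by its bounded counterpart from \eqref{splitting bounded cohomology}. Concretely, for $p\geq 3$ the alternation projector $\mathrm{Alt}_p$ is continuous in the supremum norm, so it yields a direct sum decomposition of Banach $G$-modules
$$C_{c,b}((G/P)^{(p)})\cong C_{c,b,\alt}((G/P)^{(p)})\oplus C_{c,b,\nalt}((G/P)^{(p)}),$$
and by \cite[Corollary 8.2.10]{Monodbook} this splitting passes to continuous bounded cohomology, giving
$$H^q_{c,b}(G,C_{c,b}((G/P)^{(p)}))\cong H^q_{c,b}(G,C_{c,b,\alt}((G/P)^{(p)}))\oplus H^q_{c,b}(G,C_{c,b,\nalt}((G/P)^{(p)})).$$
This is exactly \eqref{splitting bounded cohomology}, which has already been established in the text preceding the statement.

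Next I invoke Theorem \ref{thm vanishing gp bounded}, which says that the left-hand side vanishes for every $q\geq 1$ and $p\geq 3$. Since the right-hand side is a direct sum, each summand must vanish as well; in particular the alternating summand, which is the group we wish to kill. This immediately proves the corollary.

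There is no real obstacle here: the whole argument is a one-line consequence of Theorem \ref{thm vanishing gp bounded} together with the Banach splitting. The only point that would need care, had it not already been addressed in the paragraph introducing \eqref{splitting bounded} and \eqref{splitting bounded cohomology}, is the verification that both alternating and non-alternating subspaces are closed and complemented Banach submodules so that \cite[Corollary 8.2.10]{Monodbook} applies; but this has already been recorded in the text.
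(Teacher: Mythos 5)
Your proof is correct and is essentially identical to the paper's: both deduce the vanishing from the splitting \eqref{splitting bounded cohomology} together with Theorem \ref{thm vanishing gp bounded}, since a direct summand of a zero module is zero. No further comments needed.
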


\begin{proof}
By  \eqref{splitting bounded cohomology}, the cohomology group $H^q_{c,b}(G,C_{c,b,\alt}((G/P)^{(p)}))$ is a direct summand of $H^q_{c,b}(G,C_{c,b}((G/P)^{(p)}))$, which vanishes by Theorem \ref{thm vanishing gp bounded}.
\end{proof}

\begin{proof}[Proof of Theorem \ref{thm_continuous_bounded}]
We consider the first page of the spectral sequence 
$$
^{II}_bE^{p,q}_1:=(H^q(C^{\ast,p}_b,d^\uparrow),d_1:=d^\rightarrow)
$$
This page is particularly easy: in fact by Corollary \ref{cor second third columns} and Corollary \ref{cor bounded 3 points} we have that $^{II}_bE^{p,q}_1 \cong 0$ for every $p \geq 1$ and $q \geq 1$. We are left only with the first column and the first row. On the first column appears the bounded cohomology of $G$, namely
$^{II}_bE^{0,q}_1 \cong H_{c,b}^q(G)$. In a similar way, on the bottom row we get back continuous bounded alternating functions on $(G/P)^{(p)}$, that is
$^{II}_bE^{p,0}_1 \cong C_{c,b,\alt}((G/P)^{(p)})$. Additionally the differential $d_1=d^\rightarrow$ boils down to the usual homogeneous differential $\delta$ on $(G/P)^{(p)}$. We report the first page $^{II}_bE^{p,q}_1$ in Figure \ref{first page bounded}.

\begin{figure}[!h]
\centering
\begin{tikzpicture}
  \matrix (m) [matrix of math nodes,
             nodes in empty cells,
             nodes={minimum width=9ex,
                    minimum height=9ex,
                    outer sep=-3pt},
             column sep=2ex, row sep=-1ex,
             text centered,anchor=center]{
         q      &         &          &          & \\
          \cdots & \cdots & \cdots & \cdots   & \cdots \\
          3    & \ H^3_{c,b}(G) \  & \ 0 \ &\  0 \ & 0 & \ \cdots & \\
          2    &  \ H^2_{c,b}(G) \ &\  0 \  & \ 0 \ & 0 & \  \cdots &  \\
          1    & \ H^1_{c,b}(G) \ & \ 0 \ & \  0 \  & 0 & \   \cdots & \\
          0     & \ \mathbb{R}\   &\  \C_{c,b}(G/P)^G \ & \  C_{c,b,\alt}((G/P)^{(2)})^G  \ & \   C_{c,b,\alt}((G/P)^{(3)})^G \  &  \ \cdots & \\
    \quad\strut &   0  &  1  &  2  &  3  &  \cdots & p \strut \\};


\draw[->](m-3-2.east) -- (m-3-3.west)node[midway,above ]{$0$};
\draw[->](m-3-3.east) -- (m-3-4.west)node[midway,above ]{$0$};
\draw[->](m-3-4.east) -- (m-3-5.west);

\draw[->](m-4-2.east) -- (m-4-3.west)node[midway,above ]{$0$};
\draw[->](m-4-3.east) -- (m-4-4.west)node[midway,above ]{$0$};
\draw[->](m-4-4.east) -- (m-4-5.west);

\draw[->](m-5-2.east) -- (m-5-3.west)node[midway,above ]{$0$};
\draw[->](m-5-3.east) -- (m-5-4.west)node[midway,above ]{$0$};
\draw[->](m-5-4.east) -- (m-5-5.west);

\draw[->](m-6-2.east) -- (m-6-3.west)node[midway,above ]{$\delta$};
\draw[->](m-6-3.east) -- (m-6-4.west)node[midway,above ]{$\delta$};
\draw[->](m-6-4.east) -- (m-6-5.west)node[midway,above ]{$\delta$};

\draw[thick] (m-1-1.east) -- (m-7-1.east) ;
\draw[thick] (m-7-1.north) -- (m-7-7.north) ;
\end{tikzpicture}
\caption{The first page $^{II}_bE_1$}\label{first page bounded}
\end{figure}

The second page $^{II}_bE^{p,q}_2$ is simple as well: the only thing to modify is the bottom row, where we now have the cohomology on the boundary shifted by $1$, namely 
$$
^{II}_bE^{p,0}_2 \cong H_{c,b}^{p-1}(G \curvearrowright G/P). 
$$
We report the second page $^{II}_bE^{p,q}_2$ in Figure \ref{second page bounded}.

\begin{figure}[!h]
\centering
\begin{tikzpicture}
  \matrix (m) [matrix of math nodes,
             nodes in empty cells,
             nodes={minimum width=9ex,
                    minimum height=9ex,
                    outer sep=-3pt},
             column sep=2ex, row sep=-1ex,
             text centered,anchor=center]{
         q      &         &          &          & \\
          \cdots & \cdots & \cdots & \cdots   & \cdots \\
          3    & \ H^3_{c,b}(G) \  & \ 0 \ &\  0 \ & 0 & \ \cdots & \\
          2    &  \ H^2_{c,b}(G) \ &\  0 \  & \  0 \ & 0 & \  \cdots &  \\
          1    & \ H^1_{c,b}(G) \ & \ 0 \ & \  0 \  & 0 & \   \cdots & \\
          0     & \ 0\   &\  0 \ & \  H^1_{c,b,\alt}(G \curvearrowright G/P)  \ & \  H^2_{c,b,\alt}(G \curvearrowright G/P) \  &  \ \cdots & \\
    \quad\strut &   0  &  1  &  2  &  3  &  \cdots & p \strut \\};


\draw[->](m-3-2) -- (m-4-4);
\draw[->](m-3-3) -- (m-4-5);

\draw[->](m-4-2) -- (m-5-4);
\draw[->](m-4-3) -- (m-5-5);

\draw[->](m-5-2) -- (m-6-4);
\draw[->](m-5-3) -- (m-6-5);

\draw[thick] (m-1-1.east) -- (m-7-1.east) ;
\draw[thick] (m-7-1.north) -- (m-7-7.north) ;
\end{tikzpicture}
\caption{The second page $^{II}_bE_2$} \label{second page bounded}
\end{figure}
 
Since the spectral sequence $^I_bE_1$ degenerates immediately by Proposition \ref{first bounded zero}, by \cite[Appendice A]{Guichardet} the spectral sequence $^{II}_bE_1$ must converge to the same limit. The only way to obtain this convergence is that
$$
d_{q+1}:H^q_{c,b}(G) \longrightarrow H^q_{c,b,\alt}(G \curvearrowright G/P)
$$
is an isomorphism. 

Following the proof of Theorem \ref{thm_continuous}, we can consider the bounded analogue of the bicomplex $M^{p,q}$, namely
$$
M^{p,q}_b=L^\infty(G^{p+1},L^\infty_{\mathrm{alt}}((G/P)^q))^G,
$$
with the same vertical and horizontal differentials. We clearly have an inclusion
$$
C^{p,q}_b \longrightarrow M^{p,q}_b,
$$
which is a map of bicomplexes. We denote by
$$
^{II}_bE^{p,q}_{1,m}:=(H^q(M^{\ast,p}_b,d^\uparrow),d_1=d^\rightarrow)
$$
the first page of the second spectral sequence generated by the measurable bicomplex. The page $^{II}_bE_{1,m}$ is the same as the one reported in Figure \ref{first page bounded} up to substituing continuous cochains with measurable ones. The natural maps 
$$
^{II}_bE^{p,q}_{1,m} \rightarrow ^{II}_bE^{p,q}_{1}
$$
are isomorphism for every $p \geq 0$ and $q \geq 1$ by \cite[Proposition 7.5.1]{Monodbook}. By passing to the second pages, for every $q \geq 0$, we obtain the following diagram
$$
\xymatrix{
H^q_{c,b}(G) \ar[rr]^{d_{q+1,m}} \ar[rd]^{d_{q+1}} && H^q_{m,b,\alt}(G \curvearrowright G/P) \ar[ld]\\
& H^q_{c,b,\alt}(G \curvearrowright G/P), &
}
$$
where $H^q_{m,b,\mathrm{alt}}(G \curvearrowright G/P)$ is the cohomology of the cocomplex $(L^\infty((G/P)^{\ast+1})^G,\delta)$ and $d_{q+1,m}$ is the differential of order $p+1$ in the measurable bicomplex. Since we proved that $d_{q+1}$ is an isomorphism and $d_{q+1,m}$ is an isomorphism by \cite{Monod}, the statement follows.  
\end{proof}

\section{Proof of Proposition 3}

A direct consequence of the proofs of Theorem \ref{thm_continuous} and \ref{thm_continuous_bounded} is that we have the following commutative diagram
\begin{equation}\label{diagram comparison}
\xymatrix{
H^q_{cb}(G) \ar[d]_{\mathrm{comp}^q_G} \ar@{^{(}->>}[rr]^{\hspace{-40pt} d_\infty}_\cong && H^q_{cb,\alt}(G \curvearrowright G/P) \ar[d]^{p\circ \mathrm{comp}^q_{G \curvearrowright G/P}}\\
H^q_c(G) \ar@{^{(}->}[rr]^{\hspace{-40pt} d_\infty} && H^q_{c,\alt}(G \curvearrowright G/P)/H^{k-1}_c(A)^{w_0},
}
\end{equation}
where $p:H^q_{c,\alt}(G \curvearrowright G/P)\rightarrow H^q_{c,\alt}(G \curvearrowright G/P)/H^{q-1}_c(A)^{w_0}$ is the natural projection and we tacitly identify $H^{q-1}_c(A)^{w_0}$ with the image $d_\infty(H^{q-1}_c(A)^{w_0})\subset H^q_{c,\alt}(G \curvearrowright G/P)$. The functions $d_\infty$ appearing in the top and bottom arrows are determined by the differentials of the associated spectral sequences and  are both injective. 

Since the bottom $d_\infty$ is injective, the injectivity of $\mathrm{comp}^q_G$ is equivalent to the injectivity of the composition $d_\infty \circ  \mathrm{comp}^q_G= p\circ \mathrm{comp}^q_{G \curvearrowright G/P}\circ d_\infty$. Now the top $d_\infty$ is furthermore known to be an isomorphism so the latter map is injective if and only if $p\circ \mathrm{comp}^q_{G \curvearrowright G/P}$ is. The latter claim is equivalent to $\mathrm{comp}^q_{G \curvearrowright G/P}$ being injective, and for the kernel of $p$ to intersect the image of $\mathrm{comp}^q_{G \curvearrowright G/P}$ trivially, which is precisely the statement of the Proposition. 

\section{Injectivity of the comparison map}\label{sec injectivity}

We are finally ready to prove the main injectivity results of the paper.

\begin{proof}[Proof of Theorem \ref{thm boundary deg 4}] We first deal with the case $n=2$. We identify the boundary of the hyperbolic plane with the real projective line $\mathbb{P}^1(\mathbb{R})$. Recall that in the real rank one case generic tuples in $\mathbb{P}^1(\mathbb{R})^{(k)}$ simply are distinct tuples.

Consider a $\mathrm{PSL}(2,\mathbb{R})$-invariant continuous bounded alternating cocycle
$$
b:\mathbb{P}^1(\mathbb{R})^{(5)} \longrightarrow \mathbb{R}. 
$$
Suppose that there exists a $\mathrm{PSL}(2,\mathbb{R})$-invariant continuous alternating function 
$$
f:\mathbb{P}^1(\mathbb{R})^{(4)} \longrightarrow \mathbb{R}
$$
such that $\delta f=b$. We will show that $f$ is bounded, which implies the desired injectivity. We define
$$\begin{array}{rrcl}
F:&\mathbb{P}^1(\mathbb{R}) \setminus \{\infty,0,1\}& \longrightarrow& \mathbb{R},\\
&x&\longmapsto &f(\infty,0,1,x).
\end{array}$$
We will actually show that $F$ is bounded, which readily implies that $f$ is bounded as well by the transitivity of $\mathrm{PSL}(2,\mathbb{R})$ on positively oriented triples and the fact that $f$ is alternating. Recall that for distinct $x_i\in P^1(\mathbb{R})$, by the $\mathrm{PSL}(2,\mathbb{R})$-invariance of $f$, we have that 
$$f(x_0,x_1,x_2,x_3)=f(\infty,0,1,[x_0,x_1,x_2,x_3])=F([x_0,x_1,x_2,x_3]),$$
 where 
$$[x_0,x_1,x_2,x_3]=\frac{x_0-x_2}{x_0-x_3}\cdot \frac{x_1-x_3}{x_1-x_2}$$
denotes the usual cross ratio. 

By the continuity of $F$, we only need to prove its boundedness in some neighborhood of $\infty$, $0$ and $1$. In fact exploiting again the fact that $f$ is alternating it will suffice to prove boundedness of $F$ on some interval $(1-\delta,1)$, for any $\delta>0$. Let us see how this implies boundedness of $F$ on the three neighborhoods: For $x \in (1,1+\delta']$ it is sufficient to notice that 
\begin{equation}\label{x to 1/x}
F(x)=f(\infty,0,1,x)=-f(\infty,0,x,1)=-f\left(\infty,0,1,\frac{1}{x}\right)=-F\left(\frac{1}{x}\right)
\end{equation}
where we exploited the fact that $f$ is alternating. The latter term is bounded since $1/x<1$ lies in a neighborhood of $1$.  The function $F$ is thus uniformly bounded around $1$ (where the latter value is excluded). For the boundedness around $0$ we have
$$F(x)=f(\infty,0,1,x)=-f(\infty,1,0,x)=-F(1-x),$$
so that the boundedness around $0$ is equivalent to the boundedness around $1$. Finally for the boundedness around $\infty$ we use the boundedness around $0$ and the relation (\ref{x to 1/x})


Let $0<\delta<1$. We will show that $F$ is bounded on $(1-\delta,1)$. 
We apply the relation $\delta f =b$ to a $5$-tuple $(\infty,0,1,x,y)$ of distinct points
to obtain
\begin{equation}\label{F cocycle}
F(x)-F(y)+F\left(\frac{y}{x}\right)-F\left( \frac{1-y}{1-x} \right) + F\left( \frac{x(1-y)}{y(1-x)} \right) \sim 0,
\end{equation}
where the symbol $\sim$ means that the left-hand side is uniformly at bounded distance from the right-hand side. Indeed the left-hand side is equal to $b(\infty,0,1,x,y)$, which is bounded by our assumption on $b$. 

We now plug into  Equation \eqref{F cocycle} the value $y=x^2$ (which is allowed since for $x\in \mathbb{R}\setminus \{0,1\}$ the $5$-tuple $(\infty,0,1,x,x^2)$ consists of distinct points) to obtain
\begin{equation}\label{eq x minus x}
2F(x)-F(x^2)-F(1+x)+F\left(\frac{1+x}{x}\right) \sim 0.
\end{equation}
If $x$ lies in a neighborhood of $1$, then $1+x$ and $(1+x)/x$ both lie in a neighborhood of $2$ where $F$ is continuous and hence bounded. In particular, for $x\in [1-\delta,1)$, the expression \eqref{eq x minus x} rewrites as 
\begin{equation}\label{eq F twice}
F(x^2) \sim 2F(x),
\end{equation}
More precisely, there must exist  $C>0$ such that for every $x \in [1-\delta,1)$ it holds that 
\begin{equation}\label{eq C}
\frac{F(x^2)}{2}-C \leq F(x) \leq \frac{F(x^2)}{2}+C. 
\end{equation}
Since 
$$
[1-\delta,1)=\bigcup_{k \geq 0} [(1-\delta)^{\frac{1}{2^k}},(1-\delta)^{\frac{1}{2^{k+1}}}),
$$
there exists some $k\geq 0$ such that $x \in [(1-\delta)^{\frac{1}{2^k}},(1-\delta)^{\frac{1}{2^{k+1}}})$. By \eqref{eq C} we have  
$$
\frac{F(x^2)}{2}-C \leq F(x) \leq \frac{F(x^2)}{2}+C,
$$
with $x^2 \in [(1-\delta)^{\frac{1}{2^{k-1}}},(1-\delta)^{\frac{1}{2^k}})$. If $k> 0$ we can apply \eqref{eq C} again to obtain
$$
\frac{F(x^4)}{4}-C\left(1+\frac{1}{2}\right) \leq F(x) \leq \frac{F(x^4)}{4}+C\left(1+\frac{1}{2}\right),
$$
with $x^4 \in [(1-\delta)^{\frac{1}{2^{k-2}}},(1-\delta)^{\frac{1}{2^{k-1}}})$. Finally we iterate this procedure $k+1$ times, until we obtain 
$$
\frac{F(x^{2^{k+1}})}{2^{k+1}}-C\left(1+\frac{1}{2}+\ldots+\frac{1}{2^k}\right) \leq F(x) \leq \frac{F(x^{2^{k+1}})}{2^{k+1}}+C\left(1+\frac{1}{2}+\ldots+\frac{1}{2^k}\right),
$$
with $x^{2^k} \in [(1-\delta)^2,1-\delta)$. The continuity of $F$ on the closure of $[(1-\delta)^2,1-\delta)$ ensures that $F(x^{2^k})$ is bounded. Hence the convergence of the geometric series guarantees that $F(x)$ is bounded for $x \in [1-\delta,1)$ and concludes the proof for $n=2$.

When $n=3$ the argument is similar:  We identify the Furstenberg boundary with the complex projective line $\mathbb{P}^1(\mathbb{C})$. As before, we suppose that a $\mathrm{PSL}(2,\mathbb{C})$-invariant continuous bounded alternating cocycle 
$$
c:\mathbb{P}^1(\mathbb{C})^{(5)} \longrightarrow \mathbb{R}
$$
can be written as $\delta f=c$, where $f$ is a $G$-invariant continuous alternating function
$$
f:\mathbb{P}^1(\mathbb{C})^{(4)} \longrightarrow \mathbb{R}.
$$
We define 
$$\begin{array}{rrcl}
F: &\mathbb{P}^1(\mathbb{C}) \setminus \{\infty,0,1 \} &\longrightarrow& \mathbb{R}\\
&x&\longmapsto &f(\infty,0,1,x),
\end{array}$$
it is sufficient to prove the boundedness of $F$. As in the real case, it will be enough to show the boundedness of $F$ on a neighborhood of $1$. The same equations as in the $n=2$ case imply the boundedness of $F$ around $0$ and $\infty$ as well. Furthermore by (\ref{x to 1/x}) is enough to prove boundedness in the intersection of a neighborhood of $1$ and the closed unit disc centered at $1$. 

Fix some small $\delta>0$ and let $U$ be  given as
$$U=\{z\in \mathbb{C}\setminus \{1\} \mid 1-\delta <|z|\leq 1, \ -\delta < \mathrm{arg}(z)<\delta\}.$$
Precisely as in the $n=2$ case, for $x\in U$ we have
$$
2F(x)-F(x^2) \sim 0.
$$
Thus applying the same inductive estimates  it is enough to observe that for every $z\in U$ there exists $k$ such that $z^{2^k}$ belongs to the closure of 
$$\{ z\in \mathbb{C}\setminus \{1\} \mid (1-\delta)^2<|z|\leq 1, -2\delta < \mathrm{arg}(z) < 2\delta\} \setminus U,$$
on which $F$ is bounded by continuity. This concludes the proof for $n=3$.

The case $n \geq 4$ follows immediately from the case $n=3$: Let 
$$
b:(\partial \mathbb{H}^n_{\mathbb{R}})^{(5)} \longrightarrow \mathbb{R}
$$
be a $G$-invariant continuous bounded alternating cocycle and suppose that it can be written as $\delta f=b$, where $f$ is a $G$-invariant continuous alternating function
$$
f: (\partial \mathbb{H}^n_{\mathbb{R}})^{(4)} \longrightarrow \mathbb{R}. 
$$
Fix a copy of $\partial \mathbb{H}^3_{\mathbb{R}} \subset \partial \mathbb{H}^n_{\mathbb{R}}$. By the case $n=3$, we know that the restriction $f|_{(\partial \mathbb{H}^3_{\mathbb{R}})^{(4)}}$ is bounded. This implies that $f$ is bounded as well: an arbitrary $4$-tuple $(x_0,x_1,x_2,x_3)$ of boundary points lies in an $\mathrm{Isom}^+(\mathbb{H}^n)$-translate of $\partial \mathbb{H}^3$, so there exists $g \in \mathrm{Isom}^\circ(\mathbb{H}^n_{\mathbb{R}})$ such that $gx_0,gx_1,gx_2,gx_3 \in \partial \mathbb{H}^3_{\mathbb{R}}$, and the claim follows by the $G$-invariance of $f$. This concludes the proof.
\end{proof}

\bibliographystyle{alpha}

\begin{thebibliography}{20}

\bibitem{AM} Austin T., Moore C. C. 
\emph{Continuity properties of measurable group cohomology}, Math. Ann. {\bf 356} (2013), n. 3, 885--937

\bibitem{Blanc} Blanc P. 
\emph{Sur la cohomologie continue des groupes localement compacts}, Ann. Sci. \'Ecole Norm. Sup.{\bf 12} (1979), n.2,137--168.

\bibitem{Bloch} Bloch S. J. \emph{Higher regulators, algebraic K-theory and zeta functions of elliptic curves}, CRM Monograph Series 11, Amer. Math. Soc. (2000).

\bibitem{BBI18} Bucher M., Burger M. and Iozzi A., \emph{The bounded borel class and complex representations of
3-manifold groups}, Duke Math. {\bf 167} (2018), no. 17, 3129--3169.

\bibitem{BucSavExp} Bucher M, Savini A., \emph{Some explicit cocycles on the Furstenberg boundary for products of isometries of hyperbolic spaces and $\mathrm{SL}(3,\mathbb{K})$}, preprint.

\bibitem{BucSavAlt} Bucher M., Savini A., \emph{Alternating cochains on Furstenberg boundaries and measurable cohomology}, preprint.

\bibitem{BucSavProj} Bucher M., Savini A., \emph{Projections from Furstenberg boundaries onto maximal flats and barycenter maps}, preprint. 

\bibitem{BM99} Burger M., Monod N, \emph{Bounded cohomology of lattices in higher rank Lie groups}, J. Eur.
Math. Soc. \textbf{1} (1999), 199--235.

\bibitem{BuMo} Burger M., Monod. N., \emph{Continuous bounded cohomology and applications to rigidity
theory}, Geometric and Functional Analysis {\bf 12} (2002), n. 219--280.

\bibitem{BuMo2} Burger M., Monod N., \emph{On and around the bounded cohomology of SL2}, from: Rigidity in dynamics and geometry, {M. Burger, A. Iozzi editors}, Springer, Berlin (2002), 19--37.

\bibitem{DLC} De la Cruz Mengual C., \emph{The degree-three bounded cohomology of complex Lie groups of classical type},
preprint.

\bibitem{Guichardet}  Guichardet A., \emph{Cohomologie des groupes topologiques et des alg\'ebres de Lie}, Textes
Math\'ematiques, vol. 2, CEDIC, Paris, 1980.

\bibitem{HO} Hartnick T. and Ott A., \emph{Bounded cohomology via partial differential equations}. Geom. Topol.
{\bf 19} (6), (2015), 3603--3643.

\bibitem{Helgason} Helgason S., \emph{Differential geometry, Lie groups, and symmetric spaces}, Corrected reprint
of the 1978 original, Grad. Stud. Math. 34, Amer. Math. Soc., Providence, R.I., 2001.

\bibitem{Monodbook} Monod N., \emph{Continuous bounded cohomology of locally compact groups}. Number 1758 
Lecture notes in Mathematics. Springer-Verlag, Berlin, 2001.

\bibitem{MonodSLn} Monod N., \emph{Stabilization for SLn in bounded cohomology}, Proceedings of the first JAMS
Symposium, Matsushima 2002. Contemp. Math. {\bf 347} (2004), 191--202.

\bibitem{MonodICM} Monod N., \emph{An invitation to bounded cohomology}, Proceedings of the ICM 2006, Volume II, 1183--1211.

\bibitem{Monod} Monod N., \emph{The cohomology of semi-simple Lie groups, viewed from infinity}, Trans. Amer. Math. Soc. B {\bf 9} (2022), 144--159 

\bibitem{Moore} Moore C. C., \emph{Groups extensions and cohomology for locally compact groups, III}, Trans. Amer. Math. Soc. {\bf 221}
(1976), n. 1, 1--33

\bibitem{Pieters} Pieters H. \emph{The boundary model for the continuous cohomology of $\mathrm{Isom}^+(\mathbb{H}^n)$}, Groups, Geometry and Dynamics {\bf 12} (2018), n. 4, 1239--1263.

\bibitem{Wienhard} Wienhard A.,
\emph{Remarks on and around bounded differential forms}, Pure Appl. Math. Q. {\bf 8} (2012), n. 2, 479--496






\end{thebibliography}

\end{document}